\newtheorem{theorem}{Theorem}[section]
\newtheorem{corollary}[theorem] {Corollary}
\newtheorem{definition}[theorem]{Definition}
\newtheorem{lemma} [theorem]{Lemma}
\newtheorem{proposition}[theorem]{Proposition}
\newtheorem{remark}[theorem]{Remark}
\title{\textbf{\sc On Certain Arithmetic Integer Additive Set-Indexers of Graphs}}
\author{{\bf N K Sudev} $^{{1},{\ast}}$ and {\bf K A Germina $^{2}$}
\\ \\
$^{1}${\small Department of Mathematics}\\ {\small Vidya Academy of Science \& Technology} \\ {\small  Thalakkottukara, Thrissur - 680501, Kerala, India.}\\ {\small email: {\em sudevnk@gmail.com}}
\\ \vspace{0.3cm}
$^{\ast}$ {\small Corresponding author.}
\\
$^{2}${\small Department of Mathematics} \\ {\small School of Mathematical \& Physical Sciences} \\ {\small Central University of Kerala, Kasaragod - 671316, Kerala, India.}\\ {\small email: {\em srgerminaka@gmail.com}}
}
\date{}
\begin{document}
\maketitle

\begin{abstract}
Let $\mathbb{N}_0$ denote the set of all non-negative integers and $\mathcal{P}(\mathbb{N}_0)$ be its power set. An integer additive set-indexer (IASI) of a graph $G$ is an injective function $f:V(G)\to \mathcal{P}(\mathbb{N}_0)$ such that the induced function $f^+:E(G) \to \mathcal{P}(\mathbb{N}_0)$ defined by $f^+ (uv) = f(u)+ f(v)$ is also injective, where $\mathbb{N}_0$ is the set of all non-negative integers. A graph $G$ which admits an IASI is called an IASI graph. An IASI of a graph $G$ is said to be an arithmetic IASI if the elements of the set-labels of all vertices and edges of $G$ are in arithmetic progressions. In this paper, we discuss about two special types of arithmetic IASIs. 
\end{abstract}
\textbf{Key words}: Integer additive set-indexers, uniform integer additive set-indexers, arithmetic integer additive set-indexers, isoarithmetic integer additive set-indexers, biarithmetic integer additive set-indexer.
\vspace{0.05cm}

\noindent \textbf{AMS Subject Classification : 05C78}

\section{Introduction}

\subsection{Preliminaries on integer additive set-indexers}

For all  terms and definitions, not defined in this paper, we refer to \cite{FH} and for more about graph labeling, we refer to \cite{JAG}. Unless mentioned otherwise, all graphs considered here are simple, finite and have no isolated vertices.

The sum set of two sets $A$ and $B$, denoted by  $A+B$, is defined as $A + B = \{a+b: a \in A, b \in B\}$.  If at least one of two sets $A$ and $B$ is countably infinite, then their sum set $A+B$ will also be countably infinite. Hence, all sets mentioned in this paper are finite sets. We denote the cardinality of a set $A$ by $|A|$. Using the concepts of the sum set of two sets, the notion of an integer additive set-indexer of a given graph $G$ is defined in \cite{GA} as follows.

Let $\mathbb{N}_0$ denote the set of all non-negative integers and $\mathcal{P}(\mathbb{N}_0)$ be its power set. An {\em integer additive set-indexer} (IASI, in short) of a given graph $G$ is an injective function $f:V(G)\to \mathcal{P}(\mathbb{N}_0)$ such that the induced function $f^{+}:E(G) \to \mathcal{P}(\mathbb{N}_0)$ defined by $f^{+} (uv) = f(u)+ f(v)$ is also injective.  A graph $G$ which admits an integer additive set-indexer is called an {\em integer additive set-indexed graph} (IASI-graph). 

An IASI is said to be a {\em $k$-uniform IASI} if $|f^{+}(e)| = k$ for all $e\in E(G)$. That is, a connected graph $G$ is said to have a $k$-uniform IASI if all of its edges have the same set-indexing number $k$.

The cardinality of the labeling set of an element (vertex or edge) of a graph $G$ is called the {\em set-indexing number} of that element. 

The vertex set $V$ of a graph $G$ is defined to be {\em $l$-uniformly set-indexed}, if all the vertices of $G$ have the same set-indexing number $l$.

Let $f$ be an IASI defined on a graph $G$ and let $u, v$ be any two adjacent vertices in $G$. Two ordered pairs $(a,b)$ and $(c,d)$ in $f(u)\times f(v)$ are said to be {\em compatible} if $a+b=c+d$. If $(a,b)$ and $(c,d)$ are compatible, then we write $(a,b)\sim (c,d)$. Clearly, $\sim$ is an equivalence relation.

A {\em compatibility class} of an ordered pair $(a,b)$ in $f(u)\times f(v)$ with respect to the integer $k=a+b$ is the subset of $f(u)\times f(v)$ defined by $\{(c,d)\in f(u)\times f(v):(a,b)\sim (c,d)\}$ and is denoted by $\mathsf{C}_k$. Since $f(u)$ and $f(v)$ are finite sets, then each compatibility class $\mathsf{C}_k$ in $f(u)\times f(v)$ contains finite number of elements. 

It is to be noted that no compatibility class in $f(u)\times f(v)$ can be non-empty. If a compatibility class $\mathsf{C}_k$ contains only one element, then it is called a {\em trivial class}. A compatibility class $\mathsf{C}_k$ that contains maximum number of elements are called a {\em maximal compatibility class}.

\begin{lemma}\label{L-CardCC}
\cite{GS0} For a compatibility class $\mathsf{C}_k$ in $f(u)\times f(v)$, we have $1\le |\mathsf{C}_k| \le \min\,(|f(u)|,\,|f(v)|)$. 
\end{lemma}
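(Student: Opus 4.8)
The plan is to bound the cardinality of a compatibility class $\mathsf{C}_k$ by analyzing the structure of its elements as ordered pairs in $f(u)\times f(v)$ that share a common coordinate sum $k$. The lower bound is essentially definitional: since $\mathsf{C}_k$ is defined as the compatibility class of some ordered pair $(a,b)$ with $a+b=k$, that generating pair $(a,b)$ itself lies in $\mathsf{C}_k$, so the class is non-empty and $|\mathsf{C}_k|\ge 1$. I would state this immediately and move on.

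For the upper bound, the key observation I would exploit is that within a single compatibility class, the first coordinate \emph{determines} the second. Concretely, suppose $(a,b)$ and $(c,d)$ both lie in $\mathsf{C}_k$, so $a+b=k=c+d$. If additionally $a=c$, then $b=k-a=k-c=d$, forcing $(a,b)=(c,d)$. Thus the projection map sending $(a,b)\in\mathsf{C}_k$ to its first coordinate $a\in f(u)$ is injective, which immediately gives $|\mathsf{C}_k|\le |f(u)|$. By the symmetric argument, the projection onto the second coordinate into $f(v)$ is also injective, yielding $|\mathsf{C}_k|\le |f(v)|$. Taking the smaller of the two bounds gives $|\mathsf{C}_k|\le \min(|f(u)|,|f(v)|)$, which combined with the lower bound completes the proof.

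The argument is genuinely routine once the injectivity-of-projection idea is in place, so I do not anticipate a serious obstacle; the only point requiring mild care is making the two projection arguments explicit rather than hand-waving the symmetry, and ensuring that the finiteness of $f(u)$ and $f(v)$ (already guaranteed in the preliminaries) is what licenses speaking of $\min(|f(u)|,|f(v)|)$ as a finite integer. I would present the first-coordinate injectivity in full and then remark that the second-coordinate case follows by interchanging the roles of $f(u)$ and $f(v)$, concluding with the combined inequality $1\le |\mathsf{C}_k|\le \min(|f(u)|,|f(v)|)$.
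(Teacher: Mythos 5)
Your proof is correct. Note that the paper itself offers no proof of this lemma at all --- it is quoted from the earlier paper \cite{GS0} as a known result --- so there is no in-paper argument to compare against. Your argument (reflexivity of $\sim$ for the lower bound, injectivity of the coordinate projections restricted to $\mathsf{C}_k$ for the upper bound) is the standard and essentially the only natural route: since every pair in $\mathsf{C}_k$ has coordinate sum $k$, each first coordinate in $f(u)$ determines the second, so $|\mathsf{C}_k|\le |f(u)|$, and symmetrically $|\mathsf{C}_k|\le |f(v)|$. Both steps are complete and the appeal to finiteness of the set-labels is correctly placed, so the proposal stands as a valid self-contained proof of the cited lemma.
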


A compatibility class which contain the highest possible number of elements is called {\em saturated class}. That is, the cardinality of a saturated class in $f(u)\times f(v)$ is $\min(|f(u)|,|f(v)|)$. 

It is to be noted that all saturated classes in $f(u)\times f(v)$ are maximal compatible classes, but a maximal compatible class need not be a saturated class of $f(u)\times f(v)$. That is, the existence of a saturated class depends on the nature of elements in the set-labels $f(u)$ and $f(v)$.

Based on the relation between the set-indexing numbers of an edge and its end vertices in $G$, the following notion is introduced in \cite{GS2}.

A {\em strong IASI} is an IASI $f$ such that $|f^{+}(uv)|=|f(u)|\,|f(v)|$ for all $u,v\in V(G)$. A graph which admits a  strong IASI may be called a {\em strong IASI graph}. A  strong  IASI is said to be  {\em strongly uniform IASI} if $|f^{+}(uv)|=k$, for all $u,v\in V(G)$ and for some positive integer $k$.

\subsection{Arithmetic Integer Additive Set-Indexers}

By the term, an arithmetically progressive set, (AP-set, in short), we mean a set whose elements are in an arithmetic progression. In this context, since the set-labels of the elements of $G$ need to be AP-sets, we take the sets having at least three elements for labeling the vertices of a given graph $G$. 

The common difference of the set-label of an element of a graph $G$ is called the {\em deterministic index} of that element. The {\em deterministic ratio} of an edge $e$ of $G$ is the ratio, greater than or equal to $1$,  between the deterministic indices of its end vertices.

A study about the graphs whose elements are labeled by AP-sets, has been done in \cite{GS7} and proposed the following notions and results.

Let $f:V(G)\to \mathcal{P}(\mathbb{N}_0)$ be an IASI on $G$. For any vertex $v$ of $G$, if $f(v)$ is an AP-set, then $f$ is called a {\em vertex-arithmetic IASI} of $G$. For an IASI $f$ of $G$, if $f^+(e)$ is an AP-set, for all $e\in E(G)$, then $f$ is called an {\em edge-arithmetic IASI} of $G$. A graph that admits a vertex-arithmetic IASI (or an edge-arithmetic IASI) is called a {\em vertex-arithmetic IASI graph} (or an {\em edge-arithmetic IASI graph}). 

An IASI is said to be an {\em arithmetic integer additive set-indexer} if it is both vertex-arithmetic and edge-arithmetic. That is, an arithmetic IASI of a given graph $G$ is an IASI $f$, under which the set-labels of all elements of $G$ are AP-sets. A graph that admits an arithmetic IASI is called an {\em arithmetic IASI graph}. 

The admissibility of an arithmetic IASI by a graph is established in the following theorem.

\begin{theorem}\label{T-AIASI-g}
\cite{GS7} A graph $G$ admits an arithmetic IASI $f$ if and only if $f$ is a vertex arithmetic IASI and the deterministic ratio any edge of $G$ is a positive integer, which is less than or equal to the set-indexing number of its end vertex having smaller deterministic index. 
\end{theorem}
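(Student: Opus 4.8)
The plan is to observe first that, since an arithmetic IASI is by definition an IASI that is simultaneously vertex-arithmetic and edge-arithmetic, and since vertex-arithmeticity is already imposed as a hypothesis, the only thing left to characterise is edge-arithmeticity. Because $f^+$ acts edge by edge and both edge-arithmeticity and the asserted ratio condition are per-edge statements, it suffices to fix a single edge $uv$ and determine exactly when the sum set $f(u)+f(v)$ is an AP-set, given that $f(u)$ and $f(v)$ are AP-sets. So I would set notation: write $f(u)=\{a+i\,d_u : 0\le i\le m-1\}$ and $f(v)=\{b+j\,d_v : 0\le j\le n-1\}$ with common differences $d_u,d_v$ and cardinalities $m,n$, and assume without loss of generality $d_u\le d_v$, so that $u$ is the end vertex of smaller deterministic index and its set-indexing number is $m$. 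Since $f(u)+f(v)=(a+b)+\{i\,d_u+j\,d_v : 0\le i\le m-1,\ 0\le j\le n-1\}$, the question reduces to whether $T=\{i\,d_u+j\,d_v : 0\le i\le m-1,\ 0\le j\le n-1\}$ is an arithmetic progression.

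For the sufficiency direction I would assume the deterministic ratio $r=d_v/d_u$ is a positive integer with $r\le m$. Substituting $d_v=r\,d_u$ turns each element of $T$ into $(i+jr)\,d_u$, so $T$ is an AP precisely when the index set $\{i+jr : 0\le i\le m-1,\ 0\le j\le n-1\}$ is a block of consecutive integers. For a fixed $j$ the indices $i+jr$ fill the integer interval $[jr,\ jr+m-1]$, while the next block (for $j+1$) begins at $(j+1)r$. The hypothesis $r\le m$ guarantees $(j+1)r\le jr+m$, so consecutive blocks overlap or abut with no gap, and their union is exactly $\{0,1,\dots,(n-1)r+m-1\}$. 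Hence $T$, and therefore $f(u)+f(v)$, is an AP-set with common difference $d_u$, proving edge-arithmeticity.

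For the necessity direction I would assume $f(u)+f(v)$, equivalently $T$, is an AP-set and recover the stated constraints. First I would pin down the common difference of $T$: its minimum is $0$, and its smallest positive element is $d_u$ (any element with $j\ge 1$ is at least $d_v\ge d_u$, while $j=0$ yields only multiples of $d_u$, and $m\ge 2$ ensures $d_u$ itself occurs), so the common difference of $T$ must be $d_u$. Since $d_v\in T$ and every element of an AP through $0$ with common difference $d_u$ is a multiple of $d_u$, we obtain $d_v=r\,d_u$ for some positive integer $r$; thus the deterministic ratio is a positive integer. Finally, to force $r\le m$, I would argue by contradiction: if $r>m$ then $(m-1)\,d_u\in T$, but the next AP-term $m\,d_u$ cannot be written as $i\,d_u+jr\,d_u$ with $0\le i\le m-1$ and $0\le j\le n-1$ (for $j=0$ one needs $i=m$, and for $j\ge 1$ one needs $i=m-jr<0$), so $T$ has a gap and cannot be an AP. Hence $r\le m$, completing the characterisation.

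The step I expect to be most delicate is the necessity argument, specifically the two claims that the common difference of the sum set is forced to equal $d_u$ and that a non-integer or too-large ratio necessarily creates a gap; these rely on the set-labels having at least two (indeed three) elements, so that $d_u$ genuinely appears as a difference, and on comparing the $j=0$ block with the first element of the $j=1$ block. The sufficiency direction is essentially the block-overlap computation and is routine once the substitution $d_v=r\,d_u$ is made. Throughout, the injectivity requirements built into the notion of an IASI are untouched by these AP-set considerations, so no separate verification of the IASI property is needed beyond what the hypotheses already supply.
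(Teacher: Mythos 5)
Your proposal is correct, but there is nothing in this paper to compare it against: Theorem~\ref{T-AIASI-g} is quoted from \cite{GS7} (a paper listed as ``Communicated'') and no proof of it appears here. Judged on its own merits, your argument is complete. The reduction to a single edge is legitimate because both edge-arithmeticity and the ratio condition are per-edge properties; the sufficiency direction via the index blocks $[jr,\,jr+m-1]$, which overlap or abut exactly when $r\le m$ and hence union to $\{0,1,\dots,(n-1)r+m-1\}$, is sound; and the necessity direction correctly pins the common difference of the sum set to $d_u$ (using that the set-labels have at least two elements, so $d_u$ itself occurs), forces $d_v$ to be an integer multiple of $d_u$, and exhibits the gap at $m\,d_u$ when $r>m$. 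One step you should make explicit rather than leave implicit: when you call $m\,d_u$ ``the next AP-term,'' you are using the fact that $T$ contains an element exceeding $(m-1)d_u$ --- for instance $d_v=r\,d_u>m\,d_u$ itself --- so that $(m-1)d_u$ is not the maximum of $T$ and the AP property genuinely forces $m\,d_u\in T$; without that remark the contradiction is not yet closed. It is also worth noting that your block-overlap picture is exactly the device the authors do use elsewhere in this paper, namely the row-and-column arrangement in the proofs of Theorem~\ref{T-AIASI2} and Theorem~\ref{T-NSC-II}, where the last $m-k$ entries of each row coincide with the first $m-k$ entries of the next; so your route is the natural one and consistent with the paper's own toolkit, merely applied one level earlier (to the cited admissibility theorem rather than to its consequences).
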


\noindent In other words, if $v_i$ and $v_j$ are two adjacent vertices of $G$, with deterministic indices $d_i$ and $d_j$ respectively with respect to an IASI $f$ of $G$, where $d_i\le d_j$, then $f$ is an arithmetic IASI if and only if $d_j=k\,d_i$, where $k$ is a positive integer such that $1\le k \le |f(v_i)|$.

In this paper, we study the characteristics given graphs, the set-labels of whose vertices and edges are AP-sets, with certain properties.

\section{Isoarithmetic IASI of Graphs}

If two AP-sets have the same common difference $d$, then their sum set is also an AP-set with the same common difference $d$. In view of this property, we introduce the following notion.

\begin{definition}{\rm
Let $f$ be an arithmetic IASI defined on a given graph $G$. If all the elements of $G$ have the same deterministic index under $f$, then $f$ is said to be an {\em isoarithmetic IASI} of $G$. A graph which admits an isoarithmetic IASI is called an {\em isoarithmetic IASI graph}.}
\end{definition}

Note that if an IASI $f$ of a graph $G$ is an isoarithmetic IASI, then the set-labels of all elements of $G$ are AP-sets with the same common difference and the deterministic ratio of every edge of $G$ is $1$.

\begin{definition}{\rm
Let $f$ be an isoarithmetic IASI of a given graph $G$, under which $V(G)$ is $l$-uniformly set-indexed, then $f$ is called an $l$-uniform {\em $l$-uniform isoarithmetic IASI} of $G$.}
\end{definition}

In the following discussions, we study certain characteristics of isoarithmetic IASI graphs. The following theorem verifies the existence of isoarithmetic IASIs for given graphs.
 
\begin{theorem}
Every graph $G$ admits an isoarithmetic integer additive set-indexer.
\end{theorem}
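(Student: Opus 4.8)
The plan is to exploit the structural fact noted just before the definition: the sum set of two AP-sets sharing a common difference $d$ is again an AP-set with the same common difference $d$. Consequently, if every vertex of $G$ is labeled by an AP-set whose common difference is one fixed value $d$, then each edge automatically receives an AP-set with that same common difference, so the \emph{isoarithmetic} requirement is satisfied for free. The whole problem therefore reduces to choosing the vertex labels so that (i)~the vertex labeling $f$ is injective and (ii)~the induced edge labeling $f^{+}$ is injective.

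To make this concrete, I would fix an integer $l\ge 3$ and label all vertices by AP-sets of the same length $l$ and the same common difference $d$ (say $d=1$); such a label is determined completely by its least element. Enumerating the vertices as $v_1,v_2,\dots,v_n$, I would set
\[
f(v_i)=\{c_i,\ c_i+d,\ \dots,\ c_i+(l-1)d\},
\]
where $c_1,\dots,c_n$ are base points still to be chosen. By the structural fact, for an edge $v_iv_j$ we obtain
\[
f^{+}(v_iv_j)=\{c_i+c_j,\ c_i+c_j+d,\ \dots,\ c_i+c_j+(2l-2)d\},
\]
an AP-set with common difference $d$ and $2l-1$ elements, determined completely by the single integer $c_i+c_j$.

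It then remains to pick the base points so that both induced maps are injective. Injectivity of $f$ is equivalent to the $c_i$ being pairwise distinct, and injectivity of $f^{+}$ is equivalent to the edge-sums $c_i+c_j$ being pairwise distinct across the edges of $G$. The second condition is the only genuine obstacle, and it is exactly a Sidon-type ($B_2$) condition on the base points: a naive choice such as an arithmetic spacing $c_i=(i-1)M$ fails, since then $c_i+c_l=c_j+c_k$ can occur. The cleanest remedy is to take $c_i=2^{\,i}$. Because $G$ is simple, each edge corresponds to an unordered pair $\{i,j\}$ with $i\ne j$, and the uniqueness of binary representations guarantees that $2^{\,i}+2^{\,j}$ determines $\{i,j\}$, so distinct edges receive distinct sums; the $c_i$ are obviously distinct as well.

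Finally, I would confirm that the construction is genuinely an isoarithmetic IASI. Each $f(v_i)$ and each $f^{+}(v_iv_j)$ is an AP-set with common difference $d$, so every element of $G$ has the same deterministic index $d$ and the deterministic ratio of every edge is $1$; by Theorem~\ref{T-AIASI-g} this is an arithmetic IASI, and hence an isoarithmetic one (indeed an $l$-uniform isoarithmetic IASI, since $V(G)$ is $l$-uniformly set-indexed). I expect the main difficulty of the write-up to lie entirely in establishing injectivity of $f^{+}$; once a Sidon (super-increasing) base sequence such as $c_i=2^{\,i}$ is fixed, the remaining verifications are routine.
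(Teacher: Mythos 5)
Your proposal is correct, and its core idea is the same as the paper's: label every vertex with an AP-set having one fixed common difference $d$, so that every edge label is automatically an AP-set with that same common difference, making the IASI isoarithmetic. The difference is one of rigor rather than of route. The paper's proof stops at this observation — it simply posits ``an IASI $f$'' whose vertex labels are AP-sets with common difference $d$, implicitly assuming that the injectivity requirements in the definition of an IASI (injectivity of $f$ on vertices and of $f^{+}$ on edges) can be met, and never verifying this. You, by contrast, actually construct such an $f$: taking all vertex labels of the same length $l\ge 3$ and base points $c_i=2^{i}$, injectivity of $f$ follows from distinctness of the $c_i$, and injectivity of $f^{+}$ follows because a sum $2^{i}+2^{j}$ of two distinct powers of two determines the pair $\{i,j\}$ by uniqueness of binary representation. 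This Sidon-type choice of base points is exactly the step the paper omits, and it is the only step where something could go wrong (as you note, evenly spaced base points would fail). So your write-up both matches the paper's strategy and closes the genuine gap in its published proof; the only cosmetic discrepancy is that the paper insists on $d>1$ while you allow $d=1$, a restriction that is not needed for the isoarithmetic property as defined.
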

\begin{proof}
Let $f$ be an IASI defined on a given graph $G$ such that, for any vertex $v_i$ of $G$,  $f(v_i)$ is an AP-set with the same common difference $d$ , where $d>1$ is a non-negative integer. Then, $f^+(v_iv_j)$ is also an AP-set with the same common difference $d$, for all edges $v_iv_j\in E(G)$. Therefore, $f$ is an isoarithmetic IASI of $G$.
\end{proof}

The following result establishes the hereditary nature of the existence of an isoarithmetic IASI of a graph $G$.

\begin{proposition}\label{P-APSL0}
A subgraph of an isoarithmetic IASI graph $G$ admits an (induced) isoarithmetic IASI. That is, the existence of an isoarithmetic IASI is a hereditary property.
\end{proposition}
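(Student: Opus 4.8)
The plan is to show that the restriction of an isoarithmetic IASI of $G$ to any subgraph is itself an isoarithmetic IASI. Let $f$ be an isoarithmetic IASI of $G$, so that every vertex $v\in V(G)$ receives a set-label $f(v)$ which is an AP-set with a common difference $d$ that is the same for all vertices, and consequently every edge-label $f^+(uv)=f(u)+f(v)$ is an AP-set with the same common difference $d$. Let $H$ be any subgraph of $G$, so that $V(H)\subseteq V(G)$ and $E(H)\subseteq E(G)$. I would define $g=f|_{V(H)}$, the restriction of $f$ to the vertex set of $H$, and propose $g$ as the required IASI of $H$.

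First I would verify that $g$ is indeed an IASI of $H$. Since $f$ is injective on $V(G)$ and $V(H)\subseteq V(G)$, the restriction $g$ is injective on $V(H)$. The induced edge function $g^+$ on $E(H)$ satisfies $g^+(uv)=g(u)+g(v)=f(u)+f(v)=f^+(uv)$ for every edge $uv\in E(H)$; hence $g^+$ is simply the restriction of $f^+$ to $E(H)\subseteq E(G)$, and is therefore injective because $f^+$ is injective on $E(G)$. Thus $g$ is an IASI of $H$.

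Next I would check that $g$ is isoarithmetic. For each $v\in V(H)$, the label $g(v)=f(v)$ is an AP-set with common difference $d$, so every vertex of $H$ has deterministic index $d$ under $g$. Since $g^+(uv)=f^+(uv)$ is an AP-set with common difference $d$ for each edge $uv\in E(H)$, every edge of $H$ also has deterministic index $d$. Hence all elements of $H$ share the same deterministic index $d$, and the deterministic ratio of every edge of $H$ equals $1$; that is, $g$ is an isoarithmetic IASI of $H$.

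Since the entire argument rests only on the fact that injectivity and the common-difference structure are inherited by restriction to a subset, there is no substantial obstacle; the only points requiring care are confirming that the restricted induced edge map coincides with the restriction of $f^+$, so that injectivity transfers, and observing that passing to a subgraph introduces no new vertices or edges whose labels would need to be checked afresh. As this holds for an arbitrary subgraph $H$, the existence of an isoarithmetic IASI is a hereditary property.
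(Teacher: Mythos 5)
Your proof is correct and follows essentially the same approach as the paper: restrict the isoarithmetic IASI $f$ to the vertex set of the subgraph $H$ and observe that it remains an isoarithmetic IASI. The paper states this restriction argument in one line, whereas you carefully verify the injectivity of the restricted maps and the preservation of the common difference, which is a welcome but not conceptually different elaboration.
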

\begin{proof}
Let $H$ be a subgraph of the graph $G$. Let $f$ be an isoarithmetic IASI of $G$. Then, the restriction $f|_H$ of $f$ to $V(H)$ is an isoarithmetic IASI of $H$. Hence $H$ is also an isoarithmetic IASI graph.
\end{proof}

An interesting question that arises here is about the set-indexing number of edges of an isoarithmetic IASI graph. To proceed in this direction, we need the following result.

\begin{lemma}\label{L-SS-5a}
Let $A$ and $B$ be finite AP-sets of integers having the same common difference $d$. Then, $|A+B|=|A|+|B|-1$.
\end{lemma}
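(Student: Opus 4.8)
The plan is to write both AP-sets explicitly and reason about the possible values of their sumset. Suppose $A = \{a, a+d, a+2d, \ldots, a+(m-1)d\}$ with $|A| = m$ and $B = \{b, b+d, b+2d, \ldots, b+(n-1)d\}$ with $|B| = n$, both having common difference $d$. Then any element of $A + B$ has the form $(a + id) + (b + jd) = (a+b) + (i+j)d$, where $0 \le i \le m-1$ and $0 \le j \le n-1$. Thus every element of $A+B$ is determined by the value of the index sum $i+j$, which ranges over the integers from $0$ to $(m-1)+(n-1) = m+n-2$.

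The key observation is that the map sending an element of $A+B$ to the integer $i+j$ is a bijection onto the set $\{0, 1, 2, \ldots, m+n-2\}$, provided $d \neq 0$. First I would check surjectivity onto this index range: every integer $s$ with $0 \le s \le m+n-2$ can be written as $s = i+j$ for some admissible $i, j$, which is a routine interval-covering argument. Then I would check that distinct index sums give distinct sumset elements: since $d \neq 0$, the values $(a+b) + s d$ for $s = 0, \ldots, m+n-2$ are pairwise distinct. Consequently $|A+B| = (m+n-2) - 0 + 1 = m+n-1 = |A| + |B| - 1$.

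The main obstacle, though a minor one, is the degenerate case $d = 0$. If the common difference is zero then $A$ and $B$ are singletons (since their elements are required to be distinct as sets), so $|A| = |B| = 1$ and $|A+B| = 1 = |A|+|B|-1$ still holds trivially; I would note this edge case to keep the statement uniformly valid. The genuinely substantive content is simply that, because $A$ and $B$ share the same common difference $d$, the sumset collapses onto a single arithmetic progression with difference $d$ rather than spreading across a two-dimensional grid of values, so the count of distinct sums is governed entirely by the one-dimensional range of the index sum $i+j$ rather than by the product $mn$. This is exactly the phenomenon that distinguishes the isoarithmetic (equal common difference) setting from the strong IASI setting, and it is why I would emphasise the collapse to a single AP as the crux of the argument.
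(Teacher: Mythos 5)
Your proof is correct, and it is essentially the argument the paper relies on: the paper states this lemma without proof, but the identical computation appears inside the proof of Theorem \ref{T-NCC}, where the sum set of two AP-sets with common difference $d$ is written out as $\{a+b,\,a+b+d,\,\ldots,\,a+b+(m+n-2)d\}$, an arithmetic progression of exactly $m+n-1$ terms. Your explicit treatment of surjectivity of the index sum and of the degenerate case $d=0$ only adds rigour beyond what the paper records.
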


\begin{lemma}\label{L-SS-5b}
\cite{MBN} Let $A$ and $B$ be finite sets of integers with $|A|=k\ge 2, |B|=l\ge 2 $. If $|A+B|=k+l-1$, then $A$ and $B$ are arithmetic progressions with the same common difference.
\end{lemma}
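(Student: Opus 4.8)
The plan is to prove the statement directly by induction on $k=|A|$, after isolating the one structural fact that makes the induction go through: \emph{deleting the largest element of $A$ decreases the sumset cardinality by exactly one}. Throughout I would write $A=\{a_1<a_2<\cdots<a_k\}$ and $B=\{b_1<b_2<\cdots<b_l\}$. First I would record the elementary lower bound $|A+B|\ge k+l-1$, obtained from the strictly increasing chain $a_1+b_1<a_1+b_2<\cdots<a_1+b_l<a_2+b_l<\cdots<a_k+b_l$ of $k+l-1$ distinct sums; this is used as a working tool, not merely as the converse inequality.

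The base case is $k=2$. Put $d=a_2-a_1$. Since $A+B=(a_1+B)\cup(a_2+B)$ is a translate of $B\cup(B+d)$, the hypothesis $|A+B|=l+1$ gives $|B\cup(B+d)|=l+1$, whence $|B\cap(B+d)|=2l-(l+1)=l-1$. An element $x\in B$ lies in $B\cap(B+d)$ exactly when $x-d\in B$, and the only element of $B$ for which this fails is the least one $b_1$; hence $x\mapsto x-d$ is an order-preserving bijection of $\{b_2,\dots,b_l\}$ onto $\{b_1,\dots,b_{l-1}\}$, forcing $b_i-d=b_{i-1}$ for every $i\ge 2$. So $B$ is an AP with common difference $d$, matching that of the two-element set $A$.

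For the inductive step I would assume the result whenever the first set has size $k-1$ (so $k\ge 3$), and take $|A|=k$ with $|A+B|=k+l-1$. The crucial reduction is this: the sum $a_k+b_l$ is the \emph{unique} maximum of $A+B$, so it cannot belong to $A'+B$ where $A'=A\setminus\{a_k\}$; therefore $|A'+B|\le(k+l-1)-1=k+l-2$, while the lower bound gives $|A'+B|\ge(k-1)+l-1=k+l-2$, so $|A'+B|=k+l-2=|A'|+|B|-1$. The induction hypothesis then applies to $A'$ and $B$: both are APs with a common difference $d$, and it remains only to prove $a_k-a_{k-1}=d$.

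To finish I would analyse how $a_k+B$ sits inside $A+B$. Since $A'+B$ and $a_k+B$ are APs of common difference $d$, each lies in a single residue class modulo $d$, so they are either disjoint or occupy the same class; disjointness would give $|A+B|\ge(k+l-2)+l>k+l-1$ because $l\ge 2$, hence $a_k\equiv a_1\pmod d$, say $a_k=a_1+md$ with $m\ge k-1$ (as $a_k>a_{k-1}=a_1+(k-2)d$ and $d>0$). Writing both progressions on the common scale $a_1+b_1+jd$, a direct count shows the overlap of $a_k+B$ with $A'+B$ equals $l-1$ precisely when $m=k-1$ and is strictly smaller (forcing $|A+B|>k+l-1$) when $m\ge k$; thus $m=k-1$, i.e. $a_k=a_{k-1}+d$, and $A$ too is an AP with common difference $d$. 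The main obstacle is exactly this last congruence-and-counting step: one must rule out the ``gap'' case in which $a_k$ lands further out along the progression, and this is where the hypothesis $l\ge 2$ is genuinely needed.
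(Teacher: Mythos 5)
Your proof is correct, but there is nothing in the paper to compare it against: the paper states this lemma without proof, importing it from Nathanson \cite{MBN} (it is the classical inverse theorem for sumsets of minimal cardinality $|A+B|=|A|+|B|-1$). Your induction on $k=|A|$ is a sound, self-contained derivation. The base case is right: $|B\cup(B+d)|=l+1$ gives $|B\cap(B+d)|=l-1$, the least element $b_1$ cannot lie in $B\cap(B+d)$, so the shift $x\mapsto x-d$ is an order-preserving bijection of $\{b_2,\dots,b_l\}$ onto $\{b_1,\dots,b_{l-1}\}$, forcing $B$ to be a $d$-progression. The inductive step also checks out: $a_k+b_l$ is the strict maximum of $A+B$, hence absent from $A'+B$, and combined with the lower bound $|A'+B|\ge (k-1)+l-1$ this pins $|A'+B|=k+l-2$, so the hypothesis applies legitimately since $|A'|=k-1\ge 2$. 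The final step, which is where such arguments usually leak, survives scrutiny: $A'+B=\{a_1+b_1+jd: 0\le j\le k+l-3\}$, and writing $a_k=a_1+md$ with $m\ge k-1$ (the congruence being forced since disjoint residue classes would give $|A+B|\ge k+2l-2\ge k+l$), the overlap of $a_k+B$ with $A'+B$ has size $\max\bigl(0,\min(k+l-3,\,m+l-1)-m+1\bigr)\le l-1$, with equality exactly when $m=k-1$; any $m\ge k$ inflates $|A+B|$ to at least $k+l$, and this is precisely where $l\ge 2$ is used, as you note. Compared with the cited source, Nathanson's treatment is likewise elementary but organized as a direct analysis of the ordered sums rather than an induction; your remove-the-top-element induction concentrates all the difficulty in one residue-class overlap count, and has the practical merit that it could be reproduced inside the paper, making the lemma self-contained rather than imported.
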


\noindent Invoking the above lemma, we propose the following theorem.

\begin{theorem}\label{T-APSL}
Let $G$ be a graph with an arithmetic IASI $f$ defined on it. Then, $f$ is an isoarithmetic IASI on $G$ if and only if the set-indexing number of every edge of $G$ is one less than the sum of the set-indexing numbers of it end vertices.
\end{theorem}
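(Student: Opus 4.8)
The plan is to prove this biconditional by treating the two implications separately, invoking Lemma~\ref{L-SS-5a} for the forward direction and Lemma~\ref{L-SS-5b} for the converse. Throughout, I would keep in mind that the set-indexing number of an edge $uv$ is $|f^+(uv)|=|f(u)+f(v)|$, while the set-indexing numbers of its end vertices are $|f(u)|$ and $|f(v)|$; thus the condition to be characterized reads $|f(u)+f(v)|=|f(u)|+|f(v)|-1$ for every edge.

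For the forward implication, suppose $f$ is an isoarithmetic IASI. Then by definition every vertex-label is an AP-set, and they all share a single common difference $d$. Fixing an arbitrary edge $uv$, the sets $f(u)$ and $f(v)$ are finite AP-sets with the same common difference $d$, so Lemma~\ref{L-SS-5a} applies directly and gives $|f(u)+f(v)|=|f(u)|+|f(v)|-1$. Since $uv$ was arbitrary, the set-indexing number of each edge is one less than the sum of the set-indexing numbers of its end vertices, as required.

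For the converse, assume $|f^+(uv)|=|f(u)|+|f(v)|-1$ for every edge $uv$. Because $f$ is an arithmetic IASI, every vertex-label is an AP-set with at least three elements, so in particular $|f(u)|,|f(v)|\ge 2$ and the cardinality hypotheses of Lemma~\ref{L-SS-5b} are satisfied on every edge. Applying that lemma edgewise, the labels of any two adjacent vertices are arithmetic progressions sharing a common difference. I would then propagate this equality along adjacencies: moving from vertex to vertex forces a single common difference $d$ throughout each connected piece of $G$, after which every edge-label $f(u)+f(v)$ is itself an AP-set of common difference $d$ by Lemma~\ref{L-SS-5a}. Hence all vertices and edges carry the same deterministic index $d$, which is exactly the definition of an isoarithmetic IASI.

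The two direct lemma applications are routine; the step I expect to require the most care is the propagation in the converse. Lemma~\ref{L-SS-5b} equates common differences only for vertices that are genuinely adjacent, so these pairwise equalities must be chained through the graph to obtain one global difference. This chaining rests on connectivity, and for a disconnected graph it is carried out component by component, with the understanding that an isoarithmetic labeling fixes one common difference everywhere. I would also verify explicitly that the convention ``arithmetic set-labels have at least three elements'' really guarantees $|f(v)|\ge 2$ for every vertex, so that Lemma~\ref{L-SS-5b} is legitimately applicable to each edge without exception.
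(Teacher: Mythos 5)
Your proposal is correct and follows essentially the same route as the paper: both split the biconditional and use Lemma~\ref{L-SS-5a} for the forward direction and Lemma~\ref{L-SS-5b} (with the $|f(v)|\ge 3$ convention justifying its hypotheses) for the converse. If anything you are more careful than the paper, whose converse stops at the edgewise application of Lemma~\ref{L-SS-5b} and never carries out the chaining of common differences across adjacencies that you correctly flag as the delicate step --- a step that genuinely needs connectivity, since on a disconnected graph the edgewise condition only fixes a common difference per component.
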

\begin{proof}
Let $v_i$ and $v_j$ be two adjacent vertices on $G$. Then, $f(v_i)$ and $f(v_j)$ are two AP-sets with cardinalities $m$ and $n$ respectively. Since $f$ is an arithmetic IASI of $G$, $f^+(v_iv_j)$ is also an AP-set.

First, assume that $f$ is an isoarithmetic IASI on $G$. Then, $f(v_i)$ and $f(v_j)$ are AP-sets with the same common difference, say $d$. Then, the set-label of the edge $v_iv_j$ is the set $f^+(v_iv_j)$, which is also an AP-set with the same common difference $d$. Therefore, by Lemma \ref{L-SS-5a}, the set-indexing number of the edge $v_iv_j$ is $m+n-1$.

Conversely, assume that he set-indexing number of every edge of $G$ is one less than the sum of the set-indexing numbers of it end vertices. That is, for any edge $v_iv_j$ in $G$, we have $|f^+(v_iv_j)|=|f(v_i)|+|f(v_j)|-1$. Since $f$ is an arithmetic IASI of $G$, $|f(v_i)|\ge 3 ~~ \forall ~v_i\in V(G)$. Therefore, by Lemma \ref{L-SS-5b}, both $f(v_i)$ and $f(v_j)$ also have the same common difference that of $f^+(v_iv_j)$. Hence, $f$ is an isoarithmetic IASI of $G$.
\end{proof}

\noindent The following theorem is an immediate consequence of Theorem \ref{T-APSL}.

\begin{theorem}\label{T-APSL3}
Let $f$ be an arithmetic IASI defined on a given graph $G$ such that $V(G)$ is $l$-uniformly set-indexed. Then, $f$ is an isoarithmetic IASI of $G$ if and only if $G$ is a $(2l-1)$-uniform IASI graph.
\end{theorem}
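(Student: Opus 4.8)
The plan is to deduce this statement directly from Theorem \ref{T-APSL}, since the $l$-uniform hypothesis on the vertex set collapses the edge condition appearing there into a single numerical value. First I would record the consequence of the hypothesis: because $V(G)$ is $l$-uniformly set-indexed, every vertex $v$ of $G$ satisfies $|f(v)|=l$. Hence, for an arbitrary edge $v_iv_j$ of $G$, the sum of the set-indexing numbers of its two end vertices is $|f(v_i)|+|f(v_j)|=l+l=2l$, a constant that does not depend on the choice of edge.

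Next I would invoke Theorem \ref{T-APSL}, which applies since $f$ is, by hypothesis, an arithmetic IASI on $G$. That theorem asserts that $f$ is an isoarithmetic IASI if and only if $|f^+(v_iv_j)|=|f(v_i)|+|f(v_j)|-1$ holds for every edge $v_iv_j$ of $G$. Substituting the value computed in the previous step, this edge condition becomes $|f^+(v_iv_j)|=2l-1$ for every edge of $G$.

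Finally I would reinterpret this last condition via the definition of a uniform IASI: requiring $|f^+(e)|=2l-1$ for all $e\in E(G)$ is exactly the assertion that $G$ is a $(2l-1)$-uniform IASI graph. Chaining these equivalences in both directions then gives the desired ``if and only if''. I do not expect a genuine obstacle here, as the result is a direct specialization of Theorem \ref{T-APSL}; the only point needing a moment of care is to note that, because the value $2l$ is the same constant for \emph{every} edge, the target value $2l-1$ is forced simultaneously across all edges, so the equivalence is genuinely edge-wise uniform and no edge-dependent bookkeeping is required.
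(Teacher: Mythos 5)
Your proposal is correct and follows essentially the same route as the paper: both specialize Theorem \ref{T-APSL} to the $l$-uniform setting, substituting $|f(v_i)|=|f(v_j)|=l$ so that the edge condition $|f^+(v_iv_j)|=|f(v_i)|+|f(v_j)|-1$ collapses to $|f^+(v_iv_j)|=2l-1$ for every edge, which is the definition of a $(2l-1)$-uniform IASI. Your write-up is in fact slightly more careful than the paper's, since you make explicit the final step of reinterpreting the constant edge value as the uniformity condition.
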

\begin{proof}
Let $V(G)$ is $l$-uniformly set-indexed under an arithmetic IASI $f$. Then, we have $|f(v_i)|=|f(v_j)|=l$ for any two (adjacent) vertices of $G$. Then, by Theorem \ref{T-APSL}, $f$ is an isoarithmetic IASI of $G$ if and only if $|f^+(v_iv_j)=2l-1$ for every edge $v_iv_j$ in $G$. 
\end{proof}

The following result addresses the question whether an isoarithmetic IASI could be a strong IASI.

\begin{proposition}\label{P-APSL2}
No isoarithmetic IASI defined on a given graph $G$ can be a strong IASI of $G$.
\end{proposition}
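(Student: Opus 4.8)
The plan is to argue by contradiction, combining the edge set-indexing constraint forced by an isoarithmetic IASI with the defining equation of a strong IASI. Suppose, to the contrary, that some isoarithmetic IASI $f$ on $G$ is simultaneously a strong IASI. Fix any edge $v_iv_j$ of $G$ and write $m=|f(v_i)|$ and $n=|f(v_j)|$.

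First I would invoke Theorem \ref{T-APSL}: since $f$ is an isoarithmetic IASI, the set-indexing number of every edge is one less than the sum of the set-indexing numbers of its end vertices, so that $|f^+(v_iv_j)|=m+n-1$. On the other hand, the strong IASI hypothesis gives $|f^+(v_iv_j)|=|f(v_i)|\,|f(v_j)|=mn$. Equating the two expressions yields the relation $mn=m+n-1$.

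The crux is then the elementary rearrangement $mn-m-n+1=0$, that is, $(m-1)(n-1)=0$, which forces $m=1$ or $n=1$. But $f$ is an arithmetic IASI, so every vertex of $G$ is labeled by an AP-set of at least three elements; hence $m,n\ge 3$, and both factors $m-1$ and $n-1$ are at least $2$, so their product cannot vanish. This contradiction shows that no isoarithmetic IASI defined on $G$ can be a strong IASI.

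Since the computation is short, I do not anticipate a genuine obstacle here; the only subtlety is to remember to use the cardinality lower bound $|f(v_i)|\ge 3$ that accompanies every arithmetic IASI (the same bound invoked in the converse direction of Theorem \ref{T-APSL}), as this is precisely what rules out the degenerate solutions $m=1$ and $n=1$ of the derived relation and thereby closes the argument.
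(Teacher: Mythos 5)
Your proof is correct and follows essentially the same route as the paper's: invoke Theorem \ref{T-APSL} to get $|f^+(v_iv_j)|=m+n-1$, equate with the strong IASI condition $mn$, and rule out the only solutions $m=1$ or $n=1$ using the fact that arithmetic IASI set-labels have at least three elements. Your explicit factorization $(m-1)(n-1)=0$ is a nice touch that the paper leaves implicit, but the argument is the same.
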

\begin{proof}
Let $f$ be an isoarithmetic IASI of a graph $G$. Then, the set-labels of the vertices of $G$ under $f$ are AP-sets with the same common difference $d$. If possible, let $f$ be a strong IASI. Then, by Theorem \ref{T-APSL}, we have $m+n-1=mn$. This condition holds only when $m=1$ or $n=1$, which is a contradiction to the fact that the set-labels of the elements of $G$ contain at least $3$ elements. Hence, $f$ is not a strong IASI of the graph $G$.
\end{proof}

In view of Proposition \ref{P-APSL2}, for any two adjacent vertices $v_i, v_j\in V(G)$, it can be seen that under an isoarithmetic IASI $f$ on $G$, some compatibility classes in $f(v_i)\times f(v_j)$, contain more than one element. Then, the question about the number of elements in various compatibility classes arises much interest. The following theorem discusses the number of elements in the compatibility classes of $f(v_i)\times f(v_j)$ in $G$.

\begin{theorem}\label{T-NCC}
Let $G$ be a graph which admits an isoarithmetic IASI, say $f$. Then, the number of saturated classes in the Cartesian product of the set-labels of any two adjacent vertices in $G$ is one greater than the difference between cardinality of the set-labels of these vertices. More over, exactly two compatibility classes, other than the saturated classes, have the same cardinality in the Cartesian product of the set-labels of these vertices.

\end{theorem}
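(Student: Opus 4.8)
The plan is to compute the cardinalities of all the compatibility classes explicitly, exploiting the rigid structure forced by the isoarithmetic hypothesis. Without loss of generality I would assume $|f(v_i)|=m\le n=|f(v_j)|$ and write the two set-labels, which share a common difference $d$ by isoarithmeticity, as $f(v_i)=\{a+pd:0\le p\le m-1\}$ and $f(v_j)=\{b+qd:0\le q\le n-1\}$. The first observation is that the sum of a pair $(a+pd,\,b+qd)$ equals $(a+b)+(p+q)d$, and hence depends only on the index $s:=p+q$; since $d\neq 0$, distinct values of $s$ produce distinct sums. Consequently the compatibility classes of $f(v_i)\times f(v_j)$ are in bijection with the admissible values $s\in\{0,1,\ldots,m+n-2\}$, and the cardinality of the class $\mathsf{C}_k$ corresponding to $k=(a+b)+sd$ is exactly the number of lattice points $(p,q)$ with $0\le p\le m-1$, $0\le q\le n-1$ and $p+q=s$. (As a consistency check, this gives $m+n-1$ classes in total, matching $|f^+(v_iv_j)|=m+n-1$ from Lemma \ref{L-SS-5a}.)

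The central step is the elementary count $N(s)=\min(s+1,\,m,\,n,\,m+n-1-s)$ for the number of such lattice points. From this formula the maximal value of $N(s)$ is $\min(m,n)=m$, which is precisely the saturated cardinality sanctioned by Lemma \ref{L-CardCC}, and this maximum is attained exactly when $s+1\ge m$ and $m+n-1-s\ge m$, that is, for $s\in\{m-1,m,\ldots,n-1\}$. Counting these values yields $n-m+1$ saturated classes, which is one more than the difference $|f(v_j)|-|f(v_i)|$ of the two set-indexing numbers; this establishes the first assertion.

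For the second assertion I would split the non-saturated classes into a lower block $s\in\{0,\ldots,m-2\}$, on which $N(s)=s+1$ runs through $1,2,\ldots,m-1$, and an upper block $s\in\{n,\ldots,m+n-2\}$, on which $N(s)=m+n-1-s$ runs through $m-1,m-2,\ldots,1$. Each cardinality $c\in\{1,\ldots,m-1\}$ therefore occurs exactly once in the lower block (at $s=c-1$) and exactly once in the upper block (at $s=m+n-1-c$), so precisely two non-saturated classes share each such cardinality. Since the set-labels carry at least three elements, $m\ge 3$, so these blocks are non-empty and the pairing is genuine; this is exactly the claim that, apart from the saturated classes, the compatibility classes come in equal-cardinality pairs.

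I expect no serious obstacle, as the whole argument collapses to a single lattice-point count. The only point demanding care is the boundary behaviour of the $\min$ defining $N(s)$: I would verify that the saturated range $\{m-1,\ldots,n-1\}$ and the two non-saturated blocks fit together without overlap or gaps, which follows from checking $N(m-2)=m-1<m$ and $N(n)=m-1<m$ (both using $m\le n$), so that the two blocks abut the saturated range cleanly.
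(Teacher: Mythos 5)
Your proposal is correct and follows essentially the same route as the paper: both arguments identify each compatibility class with a fixed value of the index sum $s=p+q$ and read off the cardinalities, which ramp up $1,2,\ldots$, plateau at the saturated value $\min(m,n)$, and ramp back down, yielding $|n-m|+1$ saturated classes and exactly two non-saturated classes of each smaller cardinality. The only difference is presentational: where the paper enumerates the classes explicitly with ellipses, you package the count in the closed formula $N(s)=\min(s+1,m,n,m+n-1-s)$, which makes the boundary checks cleaner but is not a different proof.
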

\begin{proof}
Let $v_i$ and $v_j$ be two adjacent vertices in $G$. Also, let $|f(v_i)|=m$ and $|f(v_j)|=n$. Without loss of generality, let $m\ge n$. Then, by lemma \ref{L-CardCC}, the maximum cardinality of a compatible class in $f(v_i)\times f(v_j)$ is $n$. 

Let $f(v_i)=\{a, a+d,a+2d,\ldots,a+(m-1)d\}$ and $f(v_j)=\{b, b+d,b+2d,\ldots,b+(n-1)d\}$, where $a$ and $b$ are two positive integers. Consider the set-label of the edge $v_iv_j$ defined by $f^{+}(v_iv_j)=f(v_i)+f(v_j)$. Then, $f^{+}(v_iv_j)=\{a+b,a+b+d,a+b+2d,\ldots,a+b+(m+n-2)d\}$. 

By Lemma \ref{L-CardCC}, a compatibility class can have at most of $n$ elements. Let $r=a+b$. Then, the compatibility classes in $f(v_i)\times f(v_j)$ are given by,
\begin{eqnarray*}
\mathsf{C}_r  & = & \{(a,b)\},\\ 
\mathsf{C}_{r+d}  & = & \{(a+d,b),(a,b+d)\},\\ 
\mathsf{C}_{r+2d}  & = & \{(a+2d,b),(a+d,b+d),(a,b+2d)\},\\ 
\mathsf{C}_{r+3d}  & = & \{(a+3d,b),(a+2d,b+d),(a+d,b+2d),(a,b+3d)\},\\
%\mathsf{C}_{r+4d}  & = & \{(a+4d,b),(a+3d,b+d),(a+2d,b+2d),(a+d,b+3d),\\(a,b+4d)\},\\
........&...&......................................................\\
........&...&.......................................................\\
%\mathsf{C}_{r+(m+n-4)d} & = & \{(a+(n-1)d, b+(m-3)d), (a+(n-2)d,  b+(m-2)d),(a+(n-3)d,  b+(m-1)d)\},\\
\mathsf{C}_{r+(m+n-3)d} & = & \{(a+(n-1)d, b+(m-2)d), (a+(n-2)d,  b+(m-1)d)\},\\  
\mathsf{C}_{r+(m+n-2)d} & = & \{(a+(n-1)d, b+(m-1)d)\}.
\end{eqnarray*}

\noindent Hence, the cardinality of different compatibility classes are,
\begin{eqnarray*}
|\mathsf{C}_r| & = & |\mathsf{C}_{r+(m+n-2)d}|  =  1.\\
|\mathsf{C}_{r+d}|  & = &  |\mathsf{C}_{r+(m+n-3)d}|  =  2.\\
|\mathsf{C}_{r+2d}|  & = &  |\mathsf{C}_{r+(m+n-4)d}|  =  3.\\
|\mathsf{C}_{r+3d}|  & = &  |\mathsf{C}_{r+(m+n-5)d}|  = 4.\\
|\mathsf{C}_{r+4d}|  & = &  |\mathsf{C}_{r+(m+n-6)d}|  = 5.\\
.........&...&...........................\\
.........&...&...........................\\
|\mathsf{C}_{r+(n-2)d}|  & = &  |\mathsf{C}_{r+md}| =  n-1.
\end{eqnarray*}

Then, each of the remaining compatibility classes $\mathsf{C}_{r+(n-1)d}, \mathsf{C}_{r+(n)d}, \ldots, \mathsf{C}_{r+(m-1)d}$ contains $n$ elements, which is the highest number of elements possible in a compatibility class $\mathsf{C_r}$. Hence, all these classes are saturated classes. Therefore, the number of saturated classes in $f(v_i)\times f(v_j)$  = $m+n-1-2(n-1) =m-n+1$.

Also, it can be noted from the above equations that there are exactly two compatibility classes, other than the saturated classes, have the same cardinality $p$, where $1\le p\le n-1$. This completes the proof.
\end{proof}

\begin{corollary}
Let $f$ be an isoarithmetic IASI defined on a graph $G$, under which $V(G)$ is $l$-uniformly set-indexed. Then, there is exactly one saturated class in the Cartesian product of the set-labels of any two adjacent vertices in $G$.
\end{corollary}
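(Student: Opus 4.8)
The plan is to derive this directly from Theorem \ref{T-NCC}, since the corollary is precisely the case of equal vertex cardinalities. Recall that Theorem \ref{T-NCC} asserts that, for any two adjacent vertices $v_i, v_j$ with $|f(v_i)|=m$ and $|f(v_j)|=n$ (taking $m\ge n$ without loss of generality), the number of saturated classes in the Cartesian product $f(v_i)\times f(v_j)$ equals $m-n+1$. So the whole argument reduces to feeding the uniformity hypothesis into this count.

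First I would invoke the hypothesis that $V(G)$ is $l$-uniformly set-indexed, which means that every vertex of $G$ carries a set-label of cardinality exactly $l$. In particular, for any edge $v_iv_j$ of $G$ we have $|f(v_i)|=|f(v_j)|=l$, and hence $m=n=l$. It is worth noting that the hypotheses of Theorem \ref{T-NCC} are met automatically here, since $f$ is assumed to be an isoarithmetic IASI on all of $G$.

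Finally I would substitute $m=n=l$ into the expression $m-n+1$ furnished by Theorem \ref{T-NCC}, obtaining $l-l+1=1$. Therefore there is exactly one saturated class in $f(v_i)\times f(v_j)$ for every edge $v_iv_j$ of $G$, which is the assertion of the corollary. I do not anticipate any real obstacle, as the result is a straightforward specialization of Theorem \ref{T-NCC}; the only step requiring a moment's attention is confirming that $l$-uniformity forces $m=n$, after which the counting formula does all the work.
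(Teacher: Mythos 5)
Your proposal is correct and is essentially identical to the paper's own proof: both arguments note that $l$-uniform set-indexing forces $|f(v_i)|=|f(v_j)|=l$ for any adjacent pair, and then apply the counting formula of Theorem \ref{T-NCC} to obtain $l-l+1=1$ saturated classes.
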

\begin{proof}
Let $f$ be an isoarithmetic IASI defined on a graph $G$, under which $V(G)$ is $l$-uniformly set-indexed. Then, for any two adjacent vertices $v_i$ and $v_j$ in $G$, in $|f(v_i)|=|f(v_j)=l$. By Theorem \ref{T-NCC}, the number of saturated classes in $f(v_i)\times f(v_j)$ is $|f(v_i)|-|f(v_j)+1 = 1$.
\end{proof}

Can an isoarithmetic IASI $f$ defined on a given graph $G$ be a uniform IASI of $G$? If so, what are the conditions required for $f$ to be a uniform IASI? The following theorem provides a solution to these questions.
 
\begin{theorem}\label{T-AUIASI1}
An isoarithmetic IASI of a $G$ is a uniform IASI if and only if $V(G)$ is uniformly set-indexed or $G$ is bipartite.
\end{theorem}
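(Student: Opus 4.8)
The plan is to convert uniformity into a purely numerical condition on the vertex cardinalities by means of Theorem \ref{T-APSL}, and then read off the graph-theoretic content. Write $c(v)=|f(v)|$ for each vertex $v$. Since $f$ is isoarithmetic, Theorem \ref{T-APSL} gives $|f^+(v_iv_j)|=c(v_i)+c(v_j)-1$ for every edge $v_iv_j$. Hence $f$ is a uniform IASI, say with $|f^+(e)|=k$ for all $e$, if and only if
\[
c(v_i)+c(v_j)=k+1 \qquad\text{for every edge } v_iv_j\in E(G).
\]
Thus the whole statement reduces to the elementary question of which (connected) graphs admit a positive-integer vertex weighting whose edge-sums are constant. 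I recall that the uniform IASI notion is defined for a connected $G$, so I would work throughout under the standing assumption that $G$ is connected.

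For the forward direction I would assume this constant edge-sum condition and split into two cases according to whether $c$ is constant. If $c$ is constant, then $V(G)$ is uniformly set-indexed and we are done. If $c$ is not constant, I would show $G$ is bipartite. Along any path $v_i-v_j-v_t$ the two relations $c(v_i)+c(v_j)=k+1$ and $c(v_j)+c(v_t)=k+1$ force $c(v_i)=c(v_t)$; propagating this through the connected graph from a fixed vertex of value $a$ shows that $c$ takes only the two values $a$ and $(k+1)-a$. Since $c$ is non-constant, $a\neq (k+1)-a$, so $X=\{v:c(v)=a\}$ and $Y=\{v:c(v)=(k+1)-a\}$ are non-empty and disjoint. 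No edge can join two vertices of $X$, since that would force $2a=k+1$, i.e. $a=(k+1)-a$; likewise no edge lies inside $Y$. Hence every edge runs between $X$ and $Y$, and $(X,Y)$ is a bipartition of $G$.

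For the converse I would verify each disjunct separately. If $V(G)$ is $l$-uniformly set-indexed, then Theorem \ref{T-APSL3} shows at once that $G$ is $(2l-1)$-uniform, so $f$ is uniform. If instead $G$ is bipartite with parts $X$ and $Y$, I would produce a uniform isoarithmetic IASI by fixing a common difference $d$ and assigning every vertex of $X$ an AP-set of cardinality $m$ and every vertex of $Y$ an AP-set of cardinality $n$, with distinct initial terms chosen so that $f$ (and hence $f^+$) remains injective. Every edge then joins $X$ to $Y$, so by Theorem \ref{T-APSL} each edge has set-indexing number $m+n-1$, which is constant, and $f$ is uniform.

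The step that carries the real weight is the propagation argument in the forward direction: showing that the constant-edge-sum condition forces $c$ to take at most two values and that these two values induce the bipartition. The one point I would state carefully is that the bipartite half of the converse is an existence claim — one must choose the cardinalities constant on each part, since an arbitrary isoarithmetic IASI on a bipartite graph need not be uniform (e.g. consecutive vertices of a path could carry set-labels of cardinalities $3,3,4$, giving edge numbers $5$ and $6$).
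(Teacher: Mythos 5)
Your proposal is correct and follows essentially the same route as the paper: both reduce uniformity to the constant edge-sum condition $|f(v_i)|+|f(v_j)|=k+1$ via Theorem \ref{T-APSL}, both extract the bipartition from the two cardinality classes in the necessity direction (under the same connectivity assumption), and both prove the bipartite half of sufficiency by assigning AP-sets of constant cardinality to each part so that every edge gets set-indexing number $m+n-1$. The only difference is rigor: your path-propagation argument explicitly justifies what the paper merely asserts — that connectivity forces exactly two cardinality values alternating across every edge — and your closing caveat that the bipartite direction is an existence claim (requiring cardinalities constant on each part) is precisely what the paper's construction implicitly concedes.
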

\begin{proof}
Let $f$ be an isoarithmetic IASI of a graph $G$. If $V(G)$ is $l$-uniformly set-indexed, then by Theorem \ref{T-APSL3}, $G$ is $(2l-1)$-uniform IASI. Now, assume that $V(G)$ is not uniformly set-indexed. Then, for at least one edge of $G$, say $v_iv_j$, $|f(v_i)|\neq |f(v_j)|$. Let $G$ is bipartite with bipartition $(X,Y)$. Label the vertices of $X$ by distinct $m$-element AP-sets having the common difference $d$ and the vertices of $Y$ by distinct $n$-element AP-sets with the same common difference $d$. Then, by Theorem \ref{T-APSL}, every edge of $G$ has the set-indexing number $m+n-1$. That is, $f$ is $(m+n-1)$-uniform IASI.

Conversely, assume that $f$ is an $r$-uniform IASI of a connected graph $G$. If $V(G)$ is uniformly set-indexed, the proof is complete. Hence, assume that $V(G)$ is not uniformly set-indexed. Since $G$ is connected, there exist a unique pair of distinct positive integers $m$ and $n$ such that $r=m+n-1$ and every edge of $G$ has one vertex with set-indexing number $m$ and other end vertex with set-indexing number $n$. Let $X$ and $Y$ be the sets of all vertices of $G$ with set-indexing number $m$ and  $n$ respectively. Let $v_i\in X$. Then, $v_iv_j\in E(G)\implies f^+(v_iv_j)=m+n-1\implies v_j\in Y$. Similarly, for $v_j\in Y, v_jv_k\in E(G)\implies f^+(v_kv_j)=m+n-1\implies v_k\in X$. Therefore, $(X,Y)$ is a bipartition of $G$. 
\end{proof}

In view of Theorem \ref{T-AUIASI1}, it is natural to enquire whether an isoarithmetic IASI of a disconnected graph $G$ can be a uniform IASI and to determine the conditions, if exist,  required for an isoarithmetic IASI of such a graph $G$ to be a uniform IASI? Let us establish a solution to all these questions in the following theorem. 

\begin{theorem}\label{T-AUIASI2}
An isoarithmetic IASI $f$ of a graph $G$ is an $r$-uniform IASI if and only if every component $G$ is either bipartite or its vertex set is $l$-uniformly set-indexed, where $l=\frac{1}{2}(r+1)$.
\end{theorem}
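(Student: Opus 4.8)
The plan is to reduce the statement for a possibly disconnected graph $G$ to the connected case already settled in Theorem \ref{T-AUIASI1}, exploiting the fact that uniformity of an IASI is decided component by component. Write $G$ as the disjoint union of its connected components $G_1, G_2, \ldots, G_t$. Every edge of $G$ lies in exactly one component, and since $f^+(v_iv_j)=f(v_i)+f(v_j)$ depends only on its two end vertices, which belong to the same component, the set-indexing number of an edge is determined entirely within its component. By Proposition \ref{P-APSL0} each restriction $f|_{G_s}$ is an isoarithmetic IASI of $G_s$, and $f$ is $r$-uniform on $G$ if and only if every $f|_{G_s}$ is $r$-uniform on $G_s$.

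For the forward direction I would apply Theorem \ref{T-AUIASI1} to each component separately. Since $f|_{G_s}$ is an $r$-uniform isoarithmetic IASI of the connected graph $G_s$, either $V(G_s)$ is uniformly set-indexed or $G_s$ is bipartite. In the former case, if $V(G_s)$ is $l_s$-uniformly set-indexed, Theorem \ref{T-APSL3} forces $r=2l_s-1$, whence $l_s=\frac{1}{2}(r+1)=l$, which is precisely the value asserted in the statement. If instead $G_s$ is bipartite, then no further condition is needed: the conclusion that $G_s$ is bipartite is exactly the remaining alternative in the theorem.

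For the converse I would handle the two kinds of components in turn. If $V(G_s)$ is $l$-uniformly set-indexed with $l=\frac{1}{2}(r+1)$, then $2l-1=r$, and Theorem \ref{T-APSL3} shows every edge of $G_s$ has set-indexing number $r$. If $G_s$ is bipartite with color classes carrying set-indexing numbers $m$ and $n$, then by Theorem \ref{T-APSL} each edge has set-indexing number $m+n-1$, and the uniform structure guaranteed by Theorem \ref{T-AUIASI1} makes every such edge carry this common value. Once each component is $r$-uniform, $f$ is $r$-uniform on all of $G$.

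The step I expect to demand the most care is the bipartite alternative in the converse. To conclude that the edges of a bipartite component all receive the same constant $r$, one must ensure that the given labeling $f$ assigns its two color classes set-indexing numbers $m$ and $n$ with $m+n-1=r$; this is the compatibility that ties the structural option ``bipartite'' to the fixed uniformity constant $r$ shared across all components. I would make this numerical constraint explicit rather than leave it buried inside the appeal to Theorem \ref{T-AUIASI1}, since it is what prevents a bipartite component with an inconsistent two-sided labeling from breaking the overall $r$-uniformity.
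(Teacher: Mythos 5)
Your forward direction matches the paper's: restrict $f$ to each component via Proposition \ref{P-APSL0}, apply Theorem \ref{T-AUIASI1} componentwise, and (more carefully than the paper, in fact) pin down $l_s=\frac{1}{2}(r+1)$ via Theorem \ref{T-APSL3}. That half is fine.

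The gap is in the bipartite case of your converse, and it is exactly the point you flagged but did not resolve. You assume the two color classes of a bipartite component ``carry set-indexing numbers $m$ and $n$'' and then invoke Theorem \ref{T-AUIASI1} to conclude uniformity; but bipartiteness of a component imposes no numerical constraint whatsoever on the given labeling $f$. Nothing in the hypothesis prevents a bipartite component from being, say, a path on three vertices whose set-labels have cardinalities $3$, $4$, $5$: by Theorem \ref{T-APSL} its two edges then have set-indexing numbers $6$ and $8$, so this $f$ is not $r$-uniform for any $r$ even though every component is bipartite. Read as a statement about the fixed $f$, the converse is therefore false, and ``making the numerical constraint explicit'' cannot repair it, because $m+n-1=r$ is simply not a consequence of the stated hypotheses. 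The paper escapes this by quietly changing the quantifier: its converse \emph{constructs a new} isoarithmetic IASI, re-labeling the two classes $(X_i,Y_i)$ of each bipartite component by distinct AP-sets of fixed sizes $m_i$, $n_i$ chosen so that $m_i+n_i-1=r$, all with one common difference $d$, and pasting these $f_i$ together; that is, the converse (like the bipartite half of Theorem \ref{T-AUIASI1} itself, whose proof also re-labels) is an existence claim about admitting some $r$-uniform isoarithmetic IASI, not a claim about the given $f$. Your proposal, which keeps the given $f$ throughout and leans on Theorem \ref{T-AUIASI1} for the bipartite alternative, inherits this same conflation and cannot be completed as written; to close it you must either perform the paper's re-labeling construction in the bipartite case, or restate the theorem so that the bipartite alternative carries the numerical hypothesis you identified.
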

\begin{proof}
Let $G$ be a graph with $q$ components, say $G_1, G_2,\ldots, G_q$ and $f$ be an isoarithmetic IASI defined on $G$. Let $f$ be an $r$-uniform IASI on $G$. Since each $G_i$ is a subgraph of $G$, by Proposition \ref{P-APSL0}, a restriction $f_i$ of $f$ to $V(G_i)$ induces an isoarithmetic IASI on $G_i$, which is also an $r$-uniform IASI on $G_i$. Since $G_i$ is a connected graph, by Theorem \ref{T-AUIASI1}, $G_i$ is a bipartite graph or $V(G_i)$ is $l$-uniformly set-indexed.

Conversely, assume that every component $G$ is either bipartite or its vertex set is $l$-uniformly set-indexed. If the vertex sets of all components of $G$ are $l$-uniformly set-indexed, then $V(G)$ will also be $l$-uniformly set-indexed. Then by Theorem \ref{T-AUIASI1}, the isoarithmetic IASI $f$ will be a uniform IASI of $G$. If for a component $G_i$ of $G$, $V(G_i)$ is not uniformly indexed, then $G_i$ is a bipartite graph with bipartition $(X_i,Y_i)$. We can label the vertices in $X_i$ by distinct AP-sets having $m_i$ elements and the common difference $d>1$ and label the vertices in $Y_i$ by distinct AP-sets having $n_i$ elements and the same common difference $d$,  where $m_i, n_i\ge 3$ are the positive integers such that $m_i+n_i-1=r$. Then, the corresponding IASI, say $f_i$, is an $r$-uniform IASI of $G_i$. Label all the vertices of every component of $G$ by distinct AP-sets having the same common difference $d$, as explained above, according to whether it is bipartite or not. Then, the function $f:V(G)\to \mathcal{P}(\mathbb{N}_0)$ defined by $f(v)=f_i(v)$, if $v\in V(G_i)$ is an isoarithmetic IASI of $G$, which is a uniform IASI of $G$. 
\end{proof}

Now that we have discussed the characteristics of arithmetic IASIs of certain graphs, all of whose elements have the same deterministic indices, we now proceed to consider the graphs whose different vertices have different deterministic indices.

\section{Biarithmetic IASI graphs}

By Theorem \ref{T-AIASI-g}, a graph admits an arithmetic IASI if and only if the deterministic ratios of all its edges are positive integers greater than or equal to $1$. We have considered the case when the deterministic ratio of all edges of $G$ is $1$. For studying the remaining cases, we introduce the following notion.

\begin{definition}{\rm
An arithmetic IASI $f$ of a graph $G$, under which the deterministic ratio of each edge of $G$ is a positive integer greater than $1$ and less than or equal to the set-indexing number of the end vertex of $e$ having smaller deterministic index.}
\end{definition}

In other words, a biarithmetic IASI of a graph $G$ is an arithmetic IASI $f$ of $G$, for which the deterministic indices of any two adjacent vertices $v_i$ and $v_j$ in $G$, denoted by $d_i$ and $d_j$ respectively such that $d_i < d_j$, holds the condition $d_j=kd_i$ where $k$ is a positive integer such that $1< k \le |f(v_i)|$.

In general, all edges of $G$ may not have the same deterministic ratio. Hence, we introduce the following notion.

\begin{definition}{\rm
Let $f$ be a biarithmetic IASI defined on a graph $G$. If the deterministic ratio of every edge of $G$ is the same, say $k$, then $f$ is called an {\em identical biarithmetic IASI} of $G$ and $G$ is called an {\em identical biarithmetic IASI graph}. }
\end{definition}

The existence of a biarithmetic IASI for a given graph $G$ depends upon the cardinality of set-labels of vertices of $G$ and the adjacency between the vertices. The following result establishes the admissibility of biarithmetic IASI by a given graph.

\begin{proposition}
Every graph $G$ admits a biarithmetic IASI.
\end{proposition}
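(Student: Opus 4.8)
The plan is to reduce everything to Theorem \ref{T-AIASI-g}: to exhibit a biarithmetic IASI it suffices to produce a vertex-arithmetic IASI in which the deterministic ratio of every edge is an integer strictly greater than $1$ and at most the set-indexing number of the end vertex carrying the smaller deterministic index. The only genuinely restrictive demand is the integer-ratio condition, since for two adjacent vertices the quotient of their common differences must be a whole number. The device that settles this cleanly is to draw all deterministic indices from the powers of a fixed base, say $2$: any two distinct powers of $2$ stand in an integer ratio, and that ratio is automatically at least $2$.

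Concretely, first I would enumerate $V(G)=\{v_1,v_2,\dots,v_p\}$ and fix a common cardinality $n$ with $n\ge\max\{3,\,2^{\,p-1}\}$. To each vertex $v_i$ I would assign an $n$-element AP-set $f(v_i)$ with common difference $d_i=2^{\,i}$, choosing the initial terms of these progressions to be pairwise distinct so that $f$ is injective on $V(G)$. For any edge $v_iv_j$ with $i<j$ the smaller deterministic index is $d_i=2^{\,i}$ and the larger is $d_j=2^{\,j}$, so the deterministic ratio equals $2^{\,j-i}$, which is a positive integer with $1<2^{\,j-i}\le 2^{\,p-1}\le n=|f(v_i)|$. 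Thus the hypotheses of Theorem \ref{T-AIASI-g} are met on every edge, and since every ratio exceeds $1$ the resulting arithmetic IASI is in fact biarithmetic. One may instead use a proper colouring of $G$ and set $d_i=2^{\,c(v_i)}$ to keep the cardinalities as small as $2^{\,\chi(G)-1}$, but for a mere existence statement the crude choice above is enough.

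The step that still needs care --- and the one I expect to be the main obstacle --- is verifying that $f$ is a bona fide IASI, i.e. that the induced edge map $f^{+}$ is injective, rather than merely that the ratio conditions hold. Here I would exploit the freedom left in the initial terms: there are infinitely many AP-sets of a prescribed length and common difference, while $G$ has only finitely many edges, so I can pick the starting points successively (for instance, widely spaced) to guarantee that no two distinct edges receive the same sum set. Distinct edges whose end vertices involve different pairs of common differences are separated automatically by those differences, and the remaining possible coincidences are ruled out by a straightforward counting argument on the minimum elements of the sum sets. Once injectivity of both $f$ and $f^{+}$ is secured, Theorem \ref{T-AIASI-g} delivers the biarithmetic IASI and the proof is complete.
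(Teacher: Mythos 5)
Your proposal is correct and takes essentially the same approach as the paper: the paper offers no detailed proof, only the remark that the result can be verified by choosing the elements and cardinalities of the set-labels so that the conditions on the deterministic indices from Theorem \ref{T-AIASI-g} are fulfilled, and your powers-of-two construction is a concrete realization of precisely that strategy. As a small bonus, the injectivity of $f^{+}$ that you flag as the main remaining obstacle is in fact automatic in your construction: with deterministic indices $2^{i}$ and a common vertex cardinality $n$, the set-labels of distinct edges differ either in their common difference (determined by the smaller index) or in their cardinality, so no careful spacing of the initial terms is needed.
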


The above result can be verified by taking the elements and cardinalities of the set-labels properly so that the conditions on the deterministic indices of the elements of $G$, as mentioned in \ref{T-AIASI-g}, are fulfilled.

An identical biarithmetic IASI may not exist for every graph $G$. The following theorem discusses the conditions required for a graph $G$ to admit an identical biarithmetic IASI.

\begin{theorem}\label{T-AIIBA1}
A graph $G$ admits an identical biarithmetic IASI if and only if it is bipartite.
\end{theorem}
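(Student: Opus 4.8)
The plan is to prove both implications separately. The forward direction (an identical biarithmetic IASI forces bipartiteness) rests on a parity argument obtained by linearising the multiplicative ratio condition, and the reverse direction on an explicit two-part labelling together with Theorem \ref{T-AIASI-g}.

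For the forward direction, suppose $f$ is an identical biarithmetic IASI of $G$ with common deterministic ratio $k$, so that $k>1$. Each vertex $v$ carries a well-defined deterministic index $d(v)>0$, the common difference of its AP-set, and the defining condition says that for every edge $uv$ the larger of $d(u),d(v)$ equals $k$ times the smaller; equivalently $d(v)/d(u)\in\{k,k^{-1}\}$. I would pass to logarithms, setting $\phi(v)=\log_k d(v)$, so that $\phi(v)-\phi(u)=\pm 1$ across every edge. Fixing a root $v_0$ in a component, the quantity $\phi(v)-\phi(v_0)$ is then an integer, since it telescopes into a sum of $\pm 1$'s along any path from $v_0$ to $v$; colouring $v$ by the parity of this integer yields a proper $2$-colouring of the component, because adjacent vertices differ by the odd amount $\pm 1$. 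Carrying this out component-wise exhibits a bipartition of $G$. (Equivalently, around any cycle the $\pm 1$ increments must cancel to return $\phi$ to its initial value, forcing even cycle length and hence the absence of odd cycles.)

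For the reverse direction, let $G$ be bipartite with bipartition $(X,Y)$, and fix an integer $k\ge 2$ and a common difference $d\ge 1$. I would label each vertex of $X$ by a distinct AP-set of common difference $d$ and cardinality at least $\max(3,k)$, and each vertex of $Y$ by a distinct AP-set of common difference $kd$ and cardinality at least $3$. Every edge then joins a vertex of deterministic index $d$ to one of deterministic index $kd$, so its deterministic ratio is exactly $k$, with $1<k\le|f(v)|$ for the smaller-index endpoint $v\in X$; by Theorem \ref{T-AIASI-g} this labelling is an arithmetic IASI, and by construction it is biarithmetic with every edge-ratio equal to $k$, i.e. identical biarithmetic. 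It remains to secure injectivity of $f$ and of $f^+$. Writing the first terms as $a_i$ (for $X$) and $b_j$ (for $Y$), each edge-label is an AP-set of common difference $d$ with initial term $a_i+b_j$ and a cardinality fixed by the two endpoint cardinalities; choosing the $a_i,b_j$ so that the sums $a_i+b_j$ are pairwise distinct over the edges (for instance by spacing the first terms along well-separated scales) makes $f^+$ injective, and distinctness of the $a_i$ and of the two common differences $d\ne kd$ makes $f$ injective.

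The main obstacle is the forward direction: extracting combinatorial bipartiteness from the arithmetic data. The key realisation is that the word \emph{identical} is exactly what is needed, since a common ratio $k$ lets the multiplicative constraint $d(v)=k^{\pm 1}d(u)$ be linearised by $\log_k$, turning each edge into a unit step and reducing the statement to the classical dichotomy between even and odd cycles. The reverse direction is essentially a bookkeeping exercise once Theorem \ref{T-AIASI-g} is invoked, the only genuine care being the choice of initial terms that prevents two distinct edges from receiving the same sum-set.
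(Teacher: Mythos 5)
Your proof is correct, and its core idea --- tracking the parity of the exponent of $k$ in the deterministic indices --- is the same one driving the paper's argument, but your execution differs in ways worth recording. For the forward direction the paper argues by contradiction: it invokes the odd-cycle characterization of non-bipartite graphs, labels an odd cycle $C_{2i+1}$ edge by edge, observes that vertices at odd (resp.\ even) positions are forced to have deterministic index $k^l d$ with $l$ even (resp.\ odd), reaches a contradiction at the closing vertex $v_{2i+1}$, and then transfers the conclusion from the cycle to $G$ via the hereditary property (Remark \ref{R-IBIASISG}). Your $\log_k$ linearization does the same parity bookkeeping directly: every edge becomes a $\pm 1$ step in $\phi(v)=\log_k d(v)$, and the parity of the integer $\phi(v)-\phi(v_0)$ is an explicit proper $2$-colouring of each component, so you need neither the odd-cycle lemma nor the hereditary remark; this is cleaner than the paper's positional case analysis. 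In the reverse direction your construction matches the paper's (common difference $d$ on $X$, $kd$ on $Y$, with cardinalities ensuring $k\le|f(v)|$ on the smaller-index side; the paper instead fixes $k=\min\{|f(u_i)|:u_i\in X\}$ after the fact), but you additionally verify injectivity of $f$ and of $f^{+}$ by separating the sums $a_i+b_j$ of initial terms --- a point the paper passes over in silence, even though distinct vertex labels do not by themselves guarantee distinct edge sum-sets, since two edges whose endpoint labels have the same cardinalities and the same initial-term sum would receive identical AP-sets. So your write-up follows the same strategy as the paper while filling genuine gaps in its proof.
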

\begin{proof}
Let $G$ be a bipartite graph having a bipartition $(X,Y)$ of $V(G)$. Now, define a function $f:V(G)\to \mathcal{P}(\mathbb{N}_0)$ in such a way that $f$ assigns distinct AP-sets having the same common difference, say $d>1$, to distinct vertices in $X$ and distinct AP-sets having the same common difference, say $k\,d$, to distinct vertices of $Y$, where$k=\min\{|f(u_i)|,u_i\in X\}$. Then, $f$ is an identical biarithmetic IASI of $G$.

Conversely, let $G$ admits an identical biarithmetic IASI. If possible, assume that $G$ is not a bipartite graph. Then, $G$ contains at least one odd cycle. For a positive integer $n=2i+1; i\ge 1$, let $C_{n}=v_1v_2v_3\ldots v_{2i+1}v_1$ be an odd cycle in $G$. Let $k$ be a positive integer such that $k\le |f(v_j)|$, where $1\le j \le n$ and $d$ be a positive integer, greater than $1$. Label the first vertex $v_1$ by an AP-set with common difference $d$. Now, label the vertices $v_2, v_3,\ldots,v_{2n}$ of $C_n$ by distinct AP-sets of non-negative integers in such a way that the edges connecting these vertices in $C_n$ have the deterministic ratio $k$. Then, the vertices of $C_n$ at the odd positions have the deterministic index $k^l\,d$, where $l$ is an even integer, positive or negative, and  the vertices of $C_n$ at the even positions have the deterministic index $k^s\,d$, where $s$ is an odd integer, positive or negative. 

Now, it remains to find a set-label for the vertex $v_{2i+1}$. If we choose an AP-set, which is not used for labeling the previous vertices, to label the vertex $v_{2i+1}$ in such a way that the edge $v_{2i}v_{2i+1}$ has the deterministic ratio $k$, then the deterministic index of the vertex $v_{2i+1}$ is $k^r\,d$, where $r$ is an even integer. Therefore, the deterministic ratio of the edge $v_{2i+1}v_1$ is greater than $k$.  If we choose a set-label for $v_{2i+1}$ in such a way that the edge $v_{2i+1}v_1$ has the deterministic ratio $k$, then the deterministic index of the vertex $v_{2i+1}$ is $k^r\,d$, where $r$ is an odd integer. But, we know that $v_{2n}$ is $k^{r_1}\,d$, where $r_1$ is also an odd integer. Therefore, the deterministic ratio of the edge $v_{2i}v_{2i+1}$ can not be $k$. In both cases, $C_n$ do not admit an identical biarithmetic IASI. Then, by Remark \ref{R-IBIASISG}, $G$ can not be an identical biarithmetic IASI graph, which is a contradiction to the hypothesis. Hence, $G$ must be bipartite. This completes the proof.
\end{proof}

In the following discussion, we study certain characteristics of identical and non-identical biarithmetic IASI graphs.

Analogous to Proposition \ref{P-APSL0}, we propose following result on biarithmetic IASI graphs. 

\begin{proposition}
Any subgraph of a biarithmetic IASI graph $G$ also admits a (induced) biarithmetic IASI. That is, existence of biarithmetic IASI is a hereditary property.
\end{proposition}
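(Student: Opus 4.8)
The plan is to mimic the restriction argument used for the isoarithmetic case in Proposition \ref{P-APSL0}. Let $H$ be any subgraph of a biarithmetic IASI graph $G$, and let $f$ be a biarithmetic IASI of $G$. I would take the induced labeling $f|_H$, the restriction of $f$ to $V(H)$, and show that it is a biarithmetic IASI of $H$. Since $V(H)\subseteq V(G)$ and $E(H)\subseteq E(G)$, no new vertices or edges are introduced, so most of the required properties are inherited directly.

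First I would verify that $f|_H$ is an IASI of $H$: injectivity of $f|_H$ on $V(H)$ follows at once from the injectivity of $f$ on $V(G)$, and injectivity of the induced edge map $(f|_H)^+$ follows because $E(H)\subseteq E(G)$ and $(f|_H)^+(uv)=f^+(uv)$ for every edge $uv\in E(H)$. Next, since every vertex set-label and every edge set-label of $H$ coincides with the corresponding label in $G$, and these are AP-sets under $f$, the labeling $f|_H$ is simultaneously vertex-arithmetic and edge-arithmetic; hence it is an arithmetic IASI of $H$.

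The only point requiring genuine attention is the deterministic-ratio condition that distinguishes a biarithmetic IASI from an arbitrary arithmetic one. For any edge $v_iv_j\in E(H)$, with deterministic indices $d_i<d_j$, this edge is also an edge of $G$, so the biarithmetic hypothesis on $G$ gives $d_j=k\,d_i$ with $1<k\le|f(v_i)|$. Because restriction leaves every set-label, and therefore every deterministic index and every cardinality $|f(v_i)|$, unchanged, exactly the same relation holds for the edge $v_iv_j$ viewed inside $H$. Thus each edge of $H$ has deterministic ratio a positive integer strictly greater than $1$ and bounded by the set-indexing number of its end vertex of smaller deterministic index, and by Theorem \ref{T-AIASI-g} the labeling $f|_H$ is a biarithmetic IASI of $H$.

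I do not anticipate a substantial obstacle here, as the statement is essentially the observation that all the defining conditions of a biarithmetic IASI are local to individual vertices and edges and are therefore preserved by passing to a subgraph. The one thing to be careful about is confirming that these local conditions really are inherited without modification; once that is checked the result follows immediately. It is worth remarking that the analogous statement for identical biarithmetic IASIs holds as well, since a common deterministic ratio $k$ across all edges of $G$ remains common across the smaller edge set $E(H)$.
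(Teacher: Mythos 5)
Your proposal is correct and follows essentially the same route as the paper, which justifies the proposition by the same restriction argument (an IASI of $G$ induces one on every subgraph, with all local conditions --- AP-set labels, deterministic indices, and the ratio bound --- inherited unchanged), exactly as in Proposition \ref{P-APSL0} for the isoarithmetic case. Your closing remark about identical biarithmetic IASIs also matches the paper's own follow-up observation (Remark \ref{R-IBIASISG}).
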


This proposition can be verified from the fact that an IASI of a graph $G$ induces an IASI to all its subgraphs. By the above proposition, it can be noted that an identical biarithmetic IASI of a graph $G$ also induces an identical biarithmetic IASI to any subgraph of $G$. This statement can also be re-stated as follows.

\begin{remark}\label{R-IBIASISG}{\rm
If a graph $G$ does not admit an identical biarithmetic IASI, then no supergraph of $G$ can be an identical biarithmetic IASI graph.}
\end{remark}

To learn about the set-indexing number of edges of a biarithmetic graph, we need the following theorem which estimates the set-indexing number of edges of a arithmetic IASI graph.

\begin{theorem}\label{T-AIASI1}
\cite{GS7} Let $G$ be a graph which admits an arithmetic IASI, say $f$ and let $v_i$ and $v_j$ be two adjacent vertices in $G$ with the deterministic indices $d_i$ and $d_j$, such that $d_i\le d_j$. Then, the set-indexing number of the edge $v_iv_j$ is $|f(v_i)|+k(|f(v_j)|-1)$, where $k\le |f(v_i)|$ is the deterministic ratio of the edge $v_iv_j$. 
\end{theorem}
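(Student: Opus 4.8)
The plan is to compute the cardinality of the sum set $f^+(v_iv_j)=f(v_i)+f(v_j)$ directly by writing out both set-labels explicitly and reducing the count to a combinatorial statement about overlapping intervals of integers. Write $m=|f(v_i)|$ and $n=|f(v_j)|$, and record that since $f$ is an arithmetic IASI with deterministic ratio $k=d_j/d_i$, Theorem \ref{T-AIASI-g} guarantees $d_j=k\,d_i$ with $k$ a positive integer satisfying $k\le m$. First I would fix the representations $f(v_i)=\{a+p\,d_i:0\le p\le m-1\}$ and $f(v_j)=\{b+q\,d_j:0\le q\le n-1\}=\{b+q\,k\,d_i:0\le q\le n-1\}$, so that every element of the sum set has the form $(a+b)+(p+qk)\,d_i$ with $0\le p\le m-1$ and $0\le q\le n-1$.

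Since $d_i\ge 1$, the affine map $t\mapsto (a+b)+t\,d_i$ is injective, so $|f^+(v_iv_j)|$ equals the number of distinct integers of the form $p+qk$ over the stated ranges of $p$ and $q$. I would then organize these by the value of $q$: for each fixed $q\in\{0,1,\ldots,n-1\}$, as $p$ runs over $\{0,\ldots,m-1\}$ the quantity $p+qk$ runs over the block of $m$ consecutive integers $I_q=\{qk,\,qk+1,\,\ldots,\,qk+m-1\}$.

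The heart of the argument, and the step where the hypothesis $k\le m$ is indispensable, is to show that consecutive blocks $I_q$ and $I_{q+1}$ overlap or abut, so that their union has no gaps. Since $I_{q+1}$ begins at $(q+1)k=qk+k$ while $I_q$ ends at $qk+m-1$, the inequality $k\le m$ gives $qk+k\le qk+m$, so $I_{q+1}$ starts no later than one past the end of $I_q$; hence $I_q\cup I_{q+1}$ is again a set of consecutive integers. An easy induction on $q$ then yields $\bigcup_{q=0}^{n-1}I_q=\{0,1,\ldots,(m-1)+(n-1)k\}$, a single contiguous block. I expect this contiguity, rather than any delicate counting, to be the main obstacle, since without $k\le m$ the blocks would leave holes and the count would change.

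Finally I would read off the cardinality of this block: it runs from $0$ to $(m-1)+(n-1)k$ inclusive, and therefore contains $(m-1)+(n-1)k+1=m+k(n-1)$ integers. Substituting $m=|f(v_i)|$ and $n=|f(v_j)|$ gives $|f^+(v_iv_j)|=|f(v_i)|+k(|f(v_j)|-1)$, as claimed.
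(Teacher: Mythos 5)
Your proof is correct. Note that this paper does not actually prove Theorem \ref{T-AIASI1} at all: it is quoted from \cite{GS7} and used as a black box, so there is no in-paper proof to compare against. That said, your argument is essentially the same device the authors use in the proofs they do give of Theorem \ref{T-AIASI2} (converse direction) and Theorem \ref{T-NSC-II}: there, the elements of $f(v_i)+\{b_s\}$ for each $b_s\in f(v_j)$ are arranged in rows with equal values aligned in columns, and the observation that the last $m-k$ entries of each row recur as the first $m-k$ entries of the next row is exactly your statement that the blocks $I_q$ and $I_{q+1}$ overlap when $k\le m$; counting the distinct columns is your counting of the contiguous union $\{0,1,\ldots,(m-1)+(n-1)k\}$. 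Your write-up is, if anything, tighter than the paper's versions, since you isolate the injectivity of $t\mapsto (a+b)+t\,d_i$ (valid because $d_i\ge 1$) and make the gap-freeness of $\bigcup_q I_q$ an explicit induction, which is precisely the point where the hypothesis $k\le |f(v_i)|$ enters; the formula also correctly degenerates to Lemma \ref{L-SS-5a}'s value $m+n-1$ when $k=1$.
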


The set-indexing number of the edges a biarithmetic IASI graph can be written as a special case of Theorem \ref{T-AIASI1} as follows.

\begin{theorem}\label{T-AIASI1a}
Let $G$ be a graph which admits an arithmetic IASI, say $f$ and let $v_i$ and $v_j$ be two adjacent vertices in $G$ with the deterministic indices $d_i$ and $d_j$, such that $d_j=k\,d_i$, where $k$ is a positive integer such that $1<k\le |f(v_i)|$. Then, the set-indexing number of the edge $v_iv_j$ is $|f(v_i)|+k(|f(v_j)|-1)$. 
\end{theorem}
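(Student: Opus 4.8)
The plan is to obtain this as an immediate specialization of Theorem~\ref{T-AIASI1}, since the hypotheses here are precisely those of that theorem together with the extra restriction $k>1$. First I would check that the two statements refer to the same quantity $k$. The common difference of an AP-set is a positive integer, so $d_i>0$; hence from $d_j=k\,d_i$ with the integer $k>1$ we get $d_i<d_j$, so the ordering $d_i\le d_j$ required by Theorem~\ref{T-AIASI1} holds, and the deterministic ratio of the edge $v_iv_j$ is exactly $d_j/d_i=k$. The hypothesis $1<k\le|f(v_i)|$ then supplies the condition $k\le|f(v_i)|$ demanded by that theorem. Applying Theorem~\ref{T-AIASI1} yields the set-indexing number $|f(v_i)|+k(|f(v_j)|-1)$ at once.

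For a self-contained verification, and to locate precisely where $k\le|f(v_i)|$ is used, I would also sketch the direct count. Writing $m=|f(v_i)|$ and $n=|f(v_j)|$, take
\[
f(v_i)=\{a,a+d_i,\ldots,a+(m-1)d_i\},\qquad f(v_j)=\{b,b+k\,d_i,\ldots,b+(n-1)k\,d_i\}.
\]
Every element of $f^+(v_iv_j)=f(v_i)+f(v_j)$ then has the form $a+b+(s+tk)\,d_i$ with $0\le s\le m-1$ and $0\le t\le n-1$, so the set of multipliers of $d_i$ is $\{s+tk:0\le s\le m-1,\ 0\le t\le n-1\}$, which for each fixed $t$ is a block of $m$ consecutive integers $\{tk,tk+1,\ldots,tk+m-1\}$.

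The crux, and the only place the hypothesis enters, is that these blocks cover $\{0,1,\ldots,(n-1)k+m-1\}$ without gaps. The block for $t$ ends at $tk+m-1$ and the next block begins at $(t+1)k=tk+k$; since $k\le m$ we have $tk+k\le(tk+m-1)+1$, so no integer is skipped. Hence the multiplier set is the full interval $\{0,1,\ldots,(n-1)k+m-1\}$, of cardinality $(n-1)k+m=|f(v_i)|+k(|f(v_j)|-1)$, and after scaling by $d_i$ and shifting by $a+b$ the set $f^+(v_iv_j)$ is an AP-set of this cardinality.

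I do not anticipate a genuine obstacle, as the statement is a direct instance of Theorem~\ref{T-AIASI1}; the one point meriting care is the bookkeeping that the deterministic ratio equals $k$, that $d_i<d_j$ follows from positivity of the common difference, and that the inequality $k\le|f(v_i)|$ is exactly what prevents gaps between consecutive blocks of the sumset.
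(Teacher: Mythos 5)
Your proposal is correct, and its primary route is exactly the paper's: the paper offers no separate proof of Theorem~\ref{T-AIASI1a} at all, presenting it merely as a special case of Theorem~\ref{T-AIASI1}, which is precisely your first paragraph (including the bookkeeping that $k>1$ and $d_i>0$ force $d_i<d_j$, so the deterministic ratio is $k$ and the hypothesis $k\le|f(v_i)|$ transfers). Your second part goes beyond the paper in a useful way: since Theorem~\ref{T-AIASI1} is only cited from an unpublished reference, your direct sumset count --- writing the multipliers of $d_i$ as blocks $\{tk,\ldots,tk+m-1\}$ and observing that $k\le m$ makes consecutive blocks abut or overlap, so the multiplier set is the interval $\{0,1,\ldots,(n-1)k+m-1\}$ of size $m+k(n-1)$ --- actually supplies the missing proof within the paper's own framework. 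This block-covering argument is essentially the same ``row and column arrangement'' device the paper itself uses later in the proofs of Theorems~\ref{T-AIASI2} and~\ref{T-NSC-II}, so your self-contained verification is consistent in spirit with the paper and fills a genuine gap in its exposition rather than diverging from it.
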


Our next aim is to verify whether a biarithmetic IASI of a given graph can be a strong IASI of $G$. The following theorem explains a necessary and sufficient condition for a biarithmetic IASI of $G$ to be a strong IASI. 

\begin{theorem}\label{T-AIASI2}
Let $G$ be a graph which admits a biarithmetic IASI, say $f$. Then, $f$ is a strong IASI of $G$ if and only if the deterministic ratio of every edge of $G$ is equal to the set-indexing number of its end vertex having smaller deterministic index. 
\end{theorem}
\begin{proof}
Let $f$ be an arithmetic IASI of $G$. Let $v_i$ and $v_j$ are two adjacent vertices in $G$ and $d_i$ and $d_j$ be their deterministic indices under $f$. Without loss of generality, let $d_i<d_j$. Then, by Theorem \ref{T-AIASI1a}, the set-indexing number of the edge $v_iv_j$ is $|f(v_i)|+k(|f(v_j)|-1)$. 

Assume that $f$ is a strong IASI. Therefore, $f^{+}(v_iv_j)=mn$. Then,
\begin{eqnarray*}
|f(v_i)|+k(|f(v_j)|-1) & = & |f(v_i)|\,|f(v_j)|\\
\implies k(|f(v_j)|-1) & = & |f(v_i)|\,(|f(v_j)|-1)\\
\implies k & = & |f(v_i)|.
\end{eqnarray*}
Conversely, assume that the deterministic indices $d_i$ and $d_j$ of two adjacent vertices $v_i$ and $v_j$ respectively in $G$, where $d_i<d_j$ such that $d_j=|f(v_i)|.d_i$. Assume that $f(v_i)=\{a_r = a+rd_i:0 \le r < |f(v_i)|\}$ and $f(v_j)=\{b_s=b+s\,k\,d_i:0\le s < |f(v_j)|\}$, where $k\le |f(v_i)|$.  
Now, arrange the terms of $f^+(v_iv_j)=f(v_i)+f(v_j)$ in rows and columns as follows. For $b_s\in f(v_j), 0\le s < |f(v_j)|$, arrange the terms of $f(v_i)+b_s$ in $(s+1)$-th row in such a way that equal terms of different rows come in the same column of this arrangement. Then the common difference between consecutive elements in each row is $d_i$. Since $k=|f(v_i)|$, the difference between the final element of any row (other than the last row) and first element of its succeeding row is also $d_i$. That is, no column in this arrangement contains more than one element. Hence, all elements in this arrangement are distinct. Therefore, total number of elements in $f(v_i)+f(v_j)$ is $|f(v_i)|\,|f(v_j)|$. Hence, $f$ is a strong IASI. 
\end{proof}

Invoking Theorem \ref{T-AIASI2}, the condition for an identical biarithmetic IASI to be a strong IASI is established in the following theorem. 

\begin{theorem}\label{T-AIASI3}
An identical biarithmetic IASI of a graph $G$ is a strong IASI of $G$ if and only if one partition of $V(G)$ is $k$-uniformly set-indexed, where $k$ is the deterministic ratio of the edges of $G$.
\end{theorem}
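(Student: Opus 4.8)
The plan is to reduce everything to the characterization of strong biarithmetic IASIs already established in Theorem \ref{T-AIASI2}, and to exploit the bipartite structure forced by Theorem \ref{T-AIIBA1}. Since $f$ is an identical biarithmetic IASI with constant deterministic ratio $k$, Theorem \ref{T-AIIBA1} guarantees that $G$ is bipartite; write $(X,Y)$ for a bipartition of $V(G)$. Under $f$ the vertices of one part carry a common difference $d$ while those of the other part carry the common difference $k\,d$, so that the endpoint of smaller deterministic index of every edge lies in one fixed part. I would relabel the parts if necessary so that $X$ is this part containing the smaller-deterministic-index endpoint of each edge, and record that, because $G$ has no isolated vertices, every vertex of $X$ occurs as the smaller-index endpoint of at least one edge.

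First I would prove the forward implication. Assume $f$ is a strong IASI of $G$. By Theorem \ref{T-AIASI2}, for every edge the deterministic ratio equals the set-indexing number of its smaller-deterministic-index endpoint; since this ratio is the constant $k$, it says that $|f(v_i)|=k$ for the smaller endpoint $v_i$ of each edge. As every vertex of $X$ appears as such a smaller endpoint, we conclude $|f(x)|=k$ for all $x\in X$, i.e.\ the part $X$ is $k$-uniformly set-indexed, which is the desired conclusion.

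For the converse I would assume that one part --- necessarily the part $X$ carrying the smaller common difference --- is $k$-uniformly set-indexed, so $|f(x)|=k$ for every $x\in X$. Then for each edge $v_iv_j$ its smaller-deterministic-index endpoint $v_i$ lies in $X$ and satisfies $|f(v_i)|=k$, which is precisely the deterministic ratio of that edge. Thus the hypothesis of Theorem \ref{T-AIASI2} holds for every edge of $G$, and therefore $f$ is a strong IASI of $G$.

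The hard part is the structural bookkeeping of the first paragraph: one must check that the smaller-deterministic-index endpoints of all edges really do lie in a single part of the bipartition, so that the phrase \emph{one partition is $k$-uniformly set-indexed} refers unambiguously to that part, and one must invoke the no-isolated-vertices hypothesis to ensure that \emph{every} vertex of $X$ is genuinely a smaller endpoint (otherwise a vertex of $X$ whose set-indexing number differs from $k$ could escape the constraint). Once this identification of the correct part is secured, both directions follow at once from Theorem \ref{T-AIASI2}, with Theorem \ref{T-AIASI1a} available to recompute an edge set-indexing number explicitly should a direct cardinality comparison be preferred over citing Theorem \ref{T-AIASI2}.
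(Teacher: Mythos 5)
Your route is the same as the paper's: both directions are reduced to Theorem \ref{T-AIASI2}, with Theorem \ref{T-AIIBA1} supplying the bipartition; your use of the no-isolated-vertices hypothesis to guarantee that every vertex of $X$ actually occurs as a smaller-index endpoint is a detail the paper glosses over. However, the step you explicitly defer --- that the smaller-deterministic-index endpoints of all edges lie in a single part of the bipartition, equivalently that one part carries common difference $d$ and the other $k\,d$ --- is never proved in your proposal, and it is in fact false for a general identical biarithmetic IASI. The definition only forces each edge to have deterministic ratio exactly $k$; it does not force a two-level structure on the deterministic indices, which may form a chain $d, kd, k^2d, \ldots$. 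Concretely, on the path $uvw$ take $k=3$ and $f(u)=\{0,1,2,3\}$, $f(v)=\{1,4,7\}$, $f(w)=\{2,11,20\}$, so the deterministic indices are $1,3,9$ and both edges have ratio $3$: this is an identical biarithmetic IASI, yet the smaller-index endpoints are $u$ for the edge $uv$ and $v$ for the edge $vw$; these two vertices are adjacent and lie in opposite parts of the unique bipartition $(\{u,w\},\{v\})$. So the identification of ``the part of smaller index,'' on which both of your implications rest, does not exist in general. The same example even refutes the sufficiency direction of the statement as literally written: the part $\{v\}$ is $k$-uniformly set-indexed, but $f$ is not strong, since the edge $uv$ has ratio $3\neq|f(u)|=4$, violating the condition of Theorem \ref{T-AIASI2}.

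You should know that the paper's own proof contains exactly the same lacuna: in its necessity direction it defines $X$ to be the set of smaller-index endpoints and asserts, with no justification beyond ``since $f$ is an identical biarithmetic IASI,'' that no two vertices of $X$ are adjacent (refuted by the example above), and in its sufficiency direction it silently assumes that the $X$-endpoint of every edge is the one of smaller deterministic index. So your proposal is a faithful reconstruction of the paper's argument, but the step you yourself flagged as ``the hard part'' is not bookkeeping that can be checked --- it is the precise point at which both arguments, and the theorem as stated, break down. What Theorem \ref{T-AIASI2} genuinely yields is: an identical biarithmetic IASI $f$ with ratio $k$ is strong if and only if $|f(v)|=k$ for every vertex $v$ having a neighbour of larger deterministic index; this set of vertices need not be one part of a bipartition, so a correct version of the theorem must either adopt that formulation or restrict attention to labelings in which all vertices of one part share a common deterministic index, as in the construction used to prove Theorem \ref{T-AIIBA1}.
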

\begin{proof} 
Let $f$ be an identical arithmetic IASI of $G$. Then, by Theorem \ref{T-AIIBA1}, $G$ is bipartite. Let $(X,Y)$ be the bipartition of $G$, where $X=\{u_i, 1\le i \le r\}$ and $Y=\{v_j, 1\le j \le s\}$, $r+s=|V(G)|$. Also let $d_i$ be the deterministic index of the vertex $u_i \in X$ and $d'_j$ be the deterministic index of $v_j \in Y$. Without loss of generality, let $X$ is $k$-uniformly set-indexed, where $k$ is the deterministic ratio of the edges of $G$. Therefore, $|f(u_i)|=k~~ \forall u_i \in X$. Then, since $f$ is an identical biarithmetic IASI, we have $d'_j=|f(u_i)|\,d_i$ for every edge $u_iv_j \in V(G)$. Hence, by Theorem \ref{T-AIASI2}, $f$ is a strong IASI of $G$.

Conversely, assume that the identical biarithmetic IASI $f$ of $G$ is a strong IASI. Then, for every edge $e$ of $G$, the set-label of the end vertex of $e$ having smaller deterministic index must have exactly $k$ elements, where $k$ is the deterministic ratio of the edges of $G$. Let $X$ be the set of all these vertices having set-indexing number $k$. Since $f$ is an identical biarithmetic IASI, no vertices in $X$ can be adjacent to each other. Therefore, $(X, V-X)$ is a bipartition of $V(G)$, where $X$ is $k$-uniformly set-indexed. This completes the proof.  
\end{proof}

In this context, it is interesting to check the existence of saturated classes or maximal compatibility classes and their cardinalities. The following theorem provides the necessary and sufficient condition  for the existence of saturated classes and the number of saturated classes in the Cartesian product of the set-labels of two adjacent vertices.

\begin{theorem}\label{T-NSC-II}
Let $G$ be a graph that admits a biarithmetic IASI, say $f$. Let $v_i$ and $v_j$ be two adjacent vertices in $G$, where $v_i$ has the smaller deterministic index. Let $k$ be the deterministic ratio of the edge $v_iv_j$. Then, a compatible class in $f(v_i)\times f(v_j)$ is a saturated class if and only if $|f(v_i)|=(|f(v_j)|-1)\,k+r, ~r>0$. Also,  number of saturated classes in $f(v_i)\times f(v_j)$ is $|f(v_i)|-(|f(v_j)|-1)\,k$. Moreover, for $1\le p \le n-1$, there are exactly $2k$ compatibility classes contain $p$ elements.
\end{theorem}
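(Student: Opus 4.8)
The plan is to reduce the whole statement to a counting problem about representations of an integer, exactly as in the proof of Theorem \ref{T-NCC}, and then read off the three assertions from the resulting cardinality profile. Write $m=|f(v_i)|$, $n=|f(v_j)|$ and let $d$ be the (smaller) deterministic index of $v_i$, so that $v_j$ has deterministic index $kd$. Since $f$ is a biarithmetic IASI, I would fix the form $f(v_i)=\{a+rd:0\le r\le m-1\}$ and $f(v_j)=\{b+skd:0\le s\le n-1\}$. The sum of the pair $(a+rd,\,b+skd)$ is $(a+b)+(r+sk)d$, so two pairs are compatible precisely when they share the same value of $t:=r+sk$, and the compatibility class $\mathsf{C}_{(a+b)+td}$ has cardinality
\[
c(t)=\bigl|\{s:0\le s\le n-1,\ 0\le t-sk\le m-1\}\bigr|,
\]
the index $t$ ranging over $0\le t\le T$ with $T=(m-1)+(n-1)k$. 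This single identity is what everything flows from.

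For the saturated classes I would invoke Lemma \ref{L-CardCC}: the maximum possible cardinality is $\min(m,n)$, and a class is saturated exactly when $c(t)$ attains it. A class reaches the full count $n$ only if every $s\in\{0,\dots,n-1\}$ yields an admissible $r=t-sk$; the binding constraints are $t\ge(n-1)k$ (from $s=n-1$) and $t\le m-1$ (from $s=0$). Hence a saturated class of size $n$ exists if and only if $(n-1)k\le m-1$, i.e. $m=(n-1)k+r$ with $r>0$; moreover, since $k\ge 2$, this forces $m\ge 2n-1\ge n$, so $\min(m,n)=n$ and the stated equivalence is justified. The saturated indices are precisely the integers $t$ with $(n-1)k\le t\le m-1$, and counting them gives $m-1-(n-1)k+1=m-(n-1)k$ saturated classes, which is the second assertion.

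For the final assertion I would determine the full profile of $c(t)$. On the ascending range $0\le t\le m-1$ the upper constraint $t-sk\le m-1$ is automatic, so $c(t)=|\{s:0\le s\le n-1,\ sk\le t\}|=\min\bigl(n,\lfloor t/k\rfloor+1\bigr)$; thus for each $p$ with $1\le p\le n-1$ the value $c(t)=p$ is held exactly for $t\in\{(p-1)k,\dots,pk-1\}$, a block of exactly $k$ consecutive indices, all lying in $0\le t\le m-1$ because $(n-1)k\le m-1$. I would then exploit the symmetry $c(t)=c(T-t)$, which follows from the cardinality-preserving involution $(r,s)\mapsto(m-1-r,\,n-1-s)$ on admissible pairs, sending $t$ to $T-t$: it produces a second block of $k$ indices realizing each value $p<n$ on the descending range $m\le t\le T$, disjoint from the ascending one. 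Summing the two blocks gives exactly $2k$ classes of cardinality $p$ for each $1\le p\le n-1$.

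The main obstacle is this last assertion: one has to control the exact length of each step of the size profile, not merely its endpoints. The clean formula $c(t)=\min(n,\lfloor t/k\rfloor+1)$ on the ascending range, together with the reflection symmetry, is what pins the step length to $k$ and guarantees that the ascending and descending blocks are disjoint; without the hypothesis $(n-1)k\le m-1$ (equivalently, the existence of a saturated class) these blocks interfere and the count $2k$ can genuinely fail, so I would be careful to carry this hypothesis explicitly through the third part of the argument.
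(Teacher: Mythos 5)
Your proposal is correct and follows essentially the same route as the paper: both fix explicit AP set-labels $f(v_i)=\{a+rd\}$, $f(v_j)=\{b+skd\}$, identify compatibility classes with the common sums $(a+b)+td$, $t=r+sk$, and read off all three assertions by counting the pairs realizing each $t$ — your function $c(t)$ is exactly the column-height in the paper's rows-and-columns arrangement. Your version is in fact tighter where it matters: the closed form $c(t)=\min\bigl(n,\lfloor t/k\rfloor+1\bigr)$ together with the reflection $(r,s)\mapsto(m-1-r,\,n-1-s)$ makes the ``exactly $2k$ classes of each size $p$'' step rigorous, which the paper handles by an informal iterative removal of columns, and your explicit tracking of the hypothesis $(n-1)k\le m-1$ through the third part is a point the paper leaves implicit.
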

\begin{proof}
Let $f(v_i)=\{a, a+d_i,a+2d_i,\ldots,a+(m-1)d_i\}$ and $f(v_j)=\{b, b+kd_i,b+2kd_i,\ldots,b+(n-1)kd_i\}$, where $a$ and $b$ are positive integers. Consider the set-label of the edge $v_iv_j$ defined by $f^{+}(v_iv_j)=f(v_i)+f(v_j)$. Let $a+b=q$. Then, $f^{+}(v_iv_j)=\{q,q+d,q+2d,\ldots,q+[(m-1)+k(n-1)]d\}$. 

Arrange the elements of $f(v_i)\times f(v_j)$ in rows and columns as follows. Write the elements of $f(v_i)+\{b_s\},~b_s\in f(v_j),~0\le s\le (n-1)$ in $(s+1)$-th row in such a way that equal terms in these rows come in the same column. Hence, we have $n$ rows containing $m$ elements in each row. Then, each column of this arrangement corresponds to a compatibility class and the number of elements in a column is the cardinality of the corresponding compatibility class. It is to be noted that the last $(m-k)$ elements of each row, except the last row, will be the first $m-k$ elements of the succeeding row. Hence, for $j\le n$, if $m>jk$, then last $(m-(j-1)k)$ elements of the first row will be the first $(m-(j-1)k)$ elements of the $j$-th row.

Assume that there are $r>0$ saturated classes in $f(v_i)\times f(v_j)$. Then, clearly $m>n$ and hence by Lemma \ref{L-CardCC}, a saturated class in $f(v_i)\times f(v_j)$ can have a maximum of $n$ elements. Then, $r$ columns of the arrangement contains $n$ elements. Therefore, the last $m+(n-1)k$ elements of the first row are the first $m+(n-1)k$ elements in the $n$-th (the last) row. That is, $m-(n-1)k=r$ or  $m=(n-1)k+r$ where $r$ is a positive integer.

Conversely, assume that $m=(n-1)k+r, r>0$.  From the above step, we note that $m-(n-1)k=r$ elements of the first row are common to all $n$ rows in the above row and column arrangement. That is, $r$ elements are common to all the $n$ rows of this arrangement. Hence, there are $r=m-(n-1)k$ saturated classes in $f(v_i)\times f(v_j)$. This completes the proof.

Since each column in the row and column arrangement, we mentioned above, corresponds to a compatibility class and the set-indexing number of an edge $v_iv_j$ is equal to the number of distinct compatibility classes in $f(v_i)\times f(v_j)$, by Theorem \ref{T-AIASI1a}, we have $|f(v_i)|+k(|f(v_j)|-1)$ columns in the arrangement. Note that the first $k$ elements of the first row and the last $k$ elements of the last row do not appear in any other rows. Therefore, the number columns having exactly one element is $2k$. That is, the number of compatibility classes with one element is $2k$.

Now remove these $2k$ columns from the arrangement. Then, in the revised arrangement, the first $k$ elements of the first two rows are the same and the last $k$ elements of the last two rows are the same and these element do not appear in any other rows. Therefore, the number of compatibility classes with $2$ elements is $2k$. 

Proceeding like this, we have the number of compatibility classes having $p$ elements is $2k$, where $1\le p \le (n-1)$.
\end{proof}

It is clear that if $f(v_i)<f(v_j)$ in Theorem \ref{T-NSC-II}, then there is no saturated class in $f(v_i)\times f(v_j)$. Then, our next intention is to study the case when $|f(v_i)|=pk+q$, where $p$ and $q$ are non-negative integers such that $p<(|f(v_j)|-1)$ and $q<k$. Hence, we need to study further to determine the number of maximal compatibility classes and their cardinality. The following theorem provides the number of maximal compatibility classes in $f(v_i)\times f(v_j)$.

\begin{theorem}\label{T-NMCC-II}
Let $G$ be a graph that admits a biarithmetic IASI, say $f$. Let $v_i$ and $v_2$ be two adjacent vertices of $G$, where $v_i$ has the smaller deterministic index and $k\le|f(v_i)|$, be the deterministic ratio of the edge $v_iv_j$. If $|f(v_i)|=pk+q$, where $p,q$ are non-negative integers such that $p\le (|f(v_j)|-1)$ and $q<k$, then
\begin{enumerate}
\item[(i)] if $q=0$, then $(|f(v_j)|-p+1)k$ compatibility classes are maximal compatibility classes and contain $p$ elements.
\item[(ii)] if $q>0$, then $(|f(v_j)|-p-1)k+q$ compatibility classes are compatibility classes and contain $(p+1)$ elements.
\end{enumerate}
\end{theorem}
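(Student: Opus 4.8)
The plan is to exploit the same row-and-column arrangement used in the proof of Theorem~\ref{T-NSC-II}. Write $f(v_i)=\{a+rd_i:0\le r\le m-1\}$ with $m=|f(v_i)|=pk+q$ and $f(v_j)=\{b+s\,k\,d_i:0\le s\le n-1\}$ with $n=|f(v_j)|$, and list, for each $s$, the arithmetic progression $f(v_i)+(b+s\,k\,d_i)$ as the $(s+1)$-th row, aligning equal sums in a common column. Each column is then a compatibility class of $f(v_i)\times f(v_j)$, and the cardinality of that class equals the number of rows meeting the column. Measuring positions in units of $d_i$ from the base sum $a+b$, the column at position $t$ is met by row $s$ exactly when $s\,k\le t\le s\,k+m-1$, so the cardinality of that class equals $|\{s:0\le s\le n-1,\ s\,k\le t\le s\,k+m-1\}|$. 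This reduces the statement to a counting problem about a family of $n$ length-$m$ integer windows whose left endpoints advance by $k$.

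The first key step is to pin down the maximal cardinality. Since consecutive rows are shifted by exactly $k$ while each row spans $m=pk+q$ consecutive positions, Lemma~\ref{L-CardCC} gives a crude upper bound, but the geometry of the windows shows that the largest number of rows that can simultaneously cover one position is $\lceil m/k\rceil$, which is $p$ when $q=0$ and $p+1$ when $q>0$. This already yields the two claimed cardinalities of the maximal classes and explains the dichotomy between cases (i) and (ii).

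The second key step is to count the positions that actually attain this maximum. I would isolate the binding inequalities: a position $t$ is covered by a block of consecutive rows $s,s+1,\dots,s+(\lceil m/k\rceil-1)$ precisely when it lies in the overlap window of the top and bottom rows of that block. Taking the index of the top covering row as an anchor, the anchor may run only over the interval determined by the constraints $s\ge 0$ and $s+(\lceil m/k\rceil-1)\le n-1$, so that the whole block fits inside $\{0,1,\dots,n-1\}$; the number of maximal columns is then the number of admissible anchors times the width of each overlap window. Carrying this out in the case $q=0$, where the overlap width is $k$, gives $(|f(v_j)|-p+1)\,k$ columns of cardinality $p$, which is assertion (i); in the case $q>0$ the residual overlap width is $q$ and the admissible anchor range shrinks by one, which is the source of assertion (ii).

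The step I expect to be most delicate is precisely this endpoint bookkeeping: near the first and last rows the coverage count falls below the maximum, so I must include only those anchors for which the full block of $\lceil m/k\rceil$ consecutive rows stays inside $\{0,1,\dots,n-1\}$, and I must measure the overlap width correctly, noting that once $q>0$ this width is the residual $q$ rather than $k$. Two cross-checks would guard against off-by-one errors. First, the total number of columns must equal the set-indexing number $|f(v_i)|+k(|f(v_j)|-1)$ of $v_iv_j$ supplied by Theorem~\ref{T-AIASI1a}. Second, specialising to $p=|f(v_j)|-1$ should recover the saturated-class count of Theorem~\ref{T-NSC-II}, since in that boundary regime the maximal classes are exactly the saturated ones; reconciling the two expressions there is the natural place to check that the constants in (ii) have been tallied correctly.
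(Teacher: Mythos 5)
Your window-counting framework is sound---it is a cleaner formalization of the paper's own row-and-column arrangement---and it correctly identifies the maximal cardinalities ($p$ when $q=0$, $p+1$ when $q>0$) and correctly delivers assertion (i). The gap is in your final step for case (ii): you assert that ``anchors times overlap width'' is ``the source of assertion (ii)'' without carrying out the multiplication. Carry it out: for $q>0$ a block of $p+1$ consecutive rows anchored at row $s$ has overlap window $[(s+p)k,\,(s+p)k+q-1]$ of width $q$, and the admissible anchors are $s=0,1,\dots,n-p-1$ where $n=|f(v_j)|$, so your method yields $(n-p)q$ maximal classes, not the claimed $(n-p-1)k+q$. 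The two expressions agree only when $(n-p-1)(k-q)=0$, i.e.\ only in the boundary case $p=n-1$ (since $q<k$ is assumed). So your argument, executed to the end, does not prove (ii); it refutes it. A concrete check: take $k=2$, $f(v_i)=\{0,1,2\}$, $f(v_j)=\{0,2,4\}$, so $m=3=1\cdot 2+1$ gives $p=q=1$ and $n=3$. The compatibility classes have cardinalities $1,1,2,1,2,1,1$, so there are exactly $2=(n-p)q$ maximal classes, whereas the theorem's formula gives $(n-p-1)k+q=3$.

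Two further points. First, your proposed cross-checks would not have caught this: specializing to $p=n-1$ against Theorem \ref{T-NSC-II} lands precisely in the unique case where $(n-p)q$ and $(n-p-1)k+q$ coincide (both equal $q$), and the total-column count from Theorem \ref{T-AIASI1a} says nothing about how columns are distributed among cardinalities. Second, the discrepancy is not a defect of your method but of the statement and of the paper's own proof: the paper obtains (ii) by subtracting from the total number of columns ``exactly $2k$ classes containing $j$ elements for each $1\le j\le p$,'' a count imported from Theorem \ref{T-NSC-II} that is valid only in the saturated regime $p=n-1$. When $p<n-1$ there are more than $2k$ singleton classes (five of them in the example above, versus $2k=4$), so the subtraction overcounts the maximal classes. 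The correct completion of your argument is the count $(n-p)q$ for case (ii), and the theorem as printed needs to be amended accordingly.
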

\begin{proof}
Let $v_i$ and $v_j$ be two adjacent vertices of $G$ with deterministic indices $d_i$ and $d_j$, such that $d_j=k.d_i$,  where $k\le |f(v_i)|$ is the deterministic ratio of the edge $v_iv_j$. Also, let $|f(v_i)|=m$ and $|f(v_j)|=n$.  Since $f$ is a biarithmetic IASI, we have $k\le m$. By Theorem \ref{T-APSL3}, the set-indexing number of the edge $v_iv_j$ is $m+k(n-1)$.  Now, assume that $m=pk+q$, where $p,q$ are non-negative integers such that $1\le p\le (n-1)$ and $0\le q<k$.  Arrange the elements of $f(v_i)\times f(v_j)$ in such a way that $f(v_i)+\{b_s\}$, where $b_s=b+sd\in A_j, 0\le s\le k(n-1)\}$, in $(s+1)$-th row and equal terms in these rows come in the same column.

\noindent {\em Case-1:} Let $q=0$. That is, $m= pk$. From the above arrangement, we observe that the last $k$ elements of the each row in the first half of the arrangement and the first $k$ elements of each row in the second half of this arrangement are common to exactly $p$ rows. Therefore, the cardinality of a maximal class in this case is $p$.

Moreover, as explained in Theorem \ref{T-NSC-II}, for $1\le j \le p-1$ there exist exactly $2k$ classes containing $j$ elements. Therefore, the total number of non-maximal compatibility classes is $2k(p-1)$. Therefore, the number of maximal compatibility classes is $m+k(n-1)-2k(p-1)=pk+(n-1)k-2pk=(n-p+1)k$.

\noindent {\em Case-2:} Let $q\ge 0$. That is, $m=pk+q$. From the above arrangement, we observe that the last $q$ elements of the each row in the first half of the arrangement and the first $k$ elements of each row in the second half of this arrangement are common to exactly $p+1$ rows. Therefore, the cardinality of a maximal class in this case is $p+1$.
Now, for $1\le j \le p$ there exist exactly $2k$ classes containing $j$ elements. Therefore, the total number of non-maximal compatibility classes is $2kp$. Therefore, the number of maximal compatibility classes is $m+k(n-1)-2kp = pk+q+(n-1)k-2pk=(n-p-1)k+q$.
That is, the number of maximal classes here is $(n-p-1)k+q$ and the number of elements in each of these maximal classes is $p+1$. This completes the proof.
\end{proof}

\section{Conclusion}

In this paper, we have discussed some characteristics of graphs which admit certain types of IASIs called isoarithmetic and biarithmetic IASIs. We have formulated some conditions for some graph classes to admit these types of arithmetic IASIs and discussed about certain properties characteristics of isoarithmetic and biarithmetic IASI graphs. Problems related to the characterisation of different biarithmetic IASI graphs are still open. The problems regarding the admissibility of certain graph operations and products which admit isoarithmetic and biarithmetic IASIs, characterisation of given graphs which admit biarithmetic IASIs, uniform and non-uniform, etc. are promising and worth studying. The IASIs which are vertex arithmetic, but not edge arithmetic can also be studied in detail.

The IASIs under which the vertices of a given graph are labeled by different standard sequences of non negative integers, are also note worthy.   The problems of establishing the necessary and sufficient conditions for various graphs and graph classes to have certain IASIs still remain unsettled. All these facts highlight a wide scope for further studies in this area.


\begin{thebibliography}{15}
\bibitem {A10} B D Acharya, (1990). {\em Arithmetic Graphs}, J. Graph Theory, {\bf 14}(3), 275-299. 
\bibitem {AGA} B D Acharya, K A Germina and T M K Anandavally, {\em Some New Perspective on Arithmetic Graphs} In {\bf Labeling of Discrete Structures and Applications}, (Eds.: B D Acharya, S Arumugam and A Rosa), Narosa Publishing House, New Delhi, (2008), 41-46.
\bibitem {BM1} J A Bondy and U S R Murty, (2008). {\bf Graph Theory}, Springer.
\bibitem {CZ} G Chartrand and P Zhang, (2005). {\bf Introduction to Graph Theory}, McGraw-Hill Inc.
\bibitem {JAG} J A Gallian, (2011). {\em A Dynamic Survey of Graph Labelling}, The Electronic Journal of Combinatorics (DS 16).
\bibitem {GA} K A Germina and T M K Anandavally, (2012). {\em Integer Additive Set-Indexers of a Graph:Sum Square Graphs}, Journal of Combinatorics, Information and System Sciences, {\bf 37}(2-4), 345-358.
\bibitem {GS1} K A Germina, N K Sudev, (2013). {\em On Weakly Uniform Integer Additive Set-Indexers of Graphs}, Int. Math. Forum., {\bf 8}(37), 1827-1834.
\bibitem {GS2} K A Germina, N K Sudev, {\em Some New Results on Strong Integer Additive Set-Indexers}, Communicated.
\bibitem {GY} J. Gross, J. Yellen, {\bf Graph Theory and Its Applications}, CRC Press, (1999).
\bibitem {FH}  F Harary, (1969). {\bf Graph Theory}, Addison-Wesley Publishing Company Inc.
\bibitem {SMH} S M Hegde, (1989). {\em Numbered Graphs and Their Applications}, PhD Thesis, Delhi University.
\bibitem {MBN} M B Nathanson (1996). {\bf Additive Number Theory, Inverse Problems and Geometry of Sumsets}, Springer, New York.
\bibitem {GS0} N K Sudev and K A Germina, (2014). {\em On Integer Additive Set-Indexers of Graphs}, Int. J. Math. Sci.\& Engg. Applications, {\bf 8}(2), 11-22.
\bibitem {GS7} N K Sudev and K A Germina, {\em On Arithmetic Integer Additive Set-Indexers of Graphs}, Communicated.
\bibitem {DBW} D B West, (2001). {\bf Introduction to Graph Theory}, Pearson Education Inc.
\end{thebibliography}
\end{document}